\def\Z{\mathbb{Z}}
\def\Q{\mathbb{Q}}
\let\phi\varphi
\def\Aut{\mathop{\mathrm{Aut}}}
\def\Cayley{\mathop{\mathrm{Cay}}}
\newcommand{\footremember}[2]{%
   \footnote{#2}
    \newcounter{#1}
    \setcounter{#1}{\value{footnote}}%
}
\newcommand{\footrecall}[1]{%
    \footnotemark[\value{#1}]
}
\title{Highly arc-transitive digraphs -- structure and counterexamples
\thanks{Emails: {\tt \{mdevos,mohar\}@sfu.ca, samal@iuuk.mff.cuni.cz}}}
\date{} 
\author{Matt DeVos
\footremember{SFU}{Dept. of Mathematics, Simon Fraser University, Burnaby, Canada}
\and 
  Bojan Mohar\footrecall{SFU}\thanks{Supported in part by the
  Research Grant P1--0297 of ARRS (Slovenia), by an NSERC Discovery Grant (Canada)
  and by the Canada Research Chair program.}~\thanks{On leave from:
  IMFM \& FMF, Department of Mathematics, University of Ljubljana, Ljubljana,
  Slovenia.}
  \and
   Robert \v{S}\'{a}mal
   \thanks{Computer Science Institute of Charles University, Prague, Czech Republic.}  \ 
   \thanks{
     Partially supported by Karel Jane\v{c}ek Science \& Research Endowment (NFKJ) grant 201201. 
     Partially supported by grant LL1201 ERC CZ of the Czech Ministry of Education, Youth and Sports. 
     Partially supported by grant GA \v{C}R P202-12-G061.
   }
}
\newtheorem{theorem}{Theorem}[section]
\newtheorem{lemma}[theorem]{Lemma}
\newtheorem{proposition}[theorem]{Proposition}
\newtheorem{claim}[theorem]{Claim}
\newtheorem{conjecture}[theorem]{Conjecture}
\newtheorem{question}[theorem]{Question}
\newenvironment{proof}{\par\medskip\noindent{\bf Proof\ }}
  {\hskip 2cm\unskip\hbox{}\hfill$\Box$\par\bigskip}
\def\proofof #1{\noindent{\bf Proof of #1:\hskip 0.5em}}
\begin{document}

\maketitle

\begin{abstract}
Two problems of Cameron, Praeger, and Wormald [Infinite highly arc transitive digraphs
and universal covering digraphs, Combinatorica (1993)] are resolved.
First, locally finite highly arc-transitive digraphs with universal reachability relation 
are presented. Second, constructions of two-ended highly arc-transitive digraphs are provided, 
where each `building block' is a finite bipartite digraph that is not a disjoint union 
of complete bipartite digraphs.
Both of these were conjectured impossible in the above-mentioned paper. 
We also describe the structure
of two-ended highly arc-transitive digraphs in more generality, heading towards a characterization
of such digraphs. However, the complete characterization remains elusive.
\end{abstract}

\section{Introduction}

A digraph $D$ consists of a set of vertices $V(D)$ and arcs (also termed edges) $E(D) \subseteq V(D)\times V(D)$. We consider digraphs without loops and rely on standard terminology and
notation as in \cite{Bondy_Murty} or \cite{Diestel_2005}. 
In particular, an edge $(u,v)\in E(D)$ is shortly written as
$uv$ and interpreted as the edge from $u$ to $v$.

An \emph{$s$-arc} in a digraph is an $(s+1)$-tuple of vertices $(v_0, v_1, \dots, v_s)$
such that $v_{i-1} v_i$ is an edge for each $i=1, \dots, s$.
A digraph $D$ is \emph{$s$-arc transitive} if for every two $s$-arcs
$(v_i)_{i=0}^s$, $(v'_i)_{i=0}^s$, there is an automorphism
$f$ of~$D$ such that $f(v_i) = v'_i$ for each $i$.
To exclude trivialities, it is also assumed that $D$ has no isolated vertices
and that every arc of $D$ lies on some $s$-arc.


The notion of $s$-arc transitive digraphs parallels that of $s$-arc transitive undirected
graphs. For those, an $s$-arc corresponds to a nonretracting walk of length~$s$.
Celebrated result of Tutte \cite{tutte_family_1947} states that
a finite 3-regular graph can be $s$-arc transitive only if $s \le 5$.
Weiss \cite{weiss_nonexistence_1981} extended this (using the classification of
finite simple groups) to finite $r$-regular graphs ($r>2$); these can be $s$-arc transitive only if $s \le 8$.
(Somewhat trivially, cycles are $s$-arc transitive for every~$s$.)

A digraph is \emph{highly arc-transitive} if it is $s$-arc transitive
for every $s\ge0$. As one may expect, this is very demanding definition.
Indeed, the only connected \emph{finite} highly arc-transitive digraphs
are the directed cycles (including cycles of length 1 and 2). Among infinite digraphs, the number
of highly arc-transitive ones is much larger. Still, they are
rather restricted, which makes the constructions nontrivial,
and one may hope to characterize all such digraphs, at least to some extent.

The motivation to study highly arc-transitive digraphs does not come solely from combinatorics.
There is an intimate connection to totally disconnected locally compact groups that is
presented in M\"oller \cite{moller_CJM_2002}, see also Malni\v{c} et al.~\cite{malnic_highly_2005}.

An obvious infinite highly arc-transitive digraph is the two-way-infinite
directed path, which we shall denote by $Z$. Another immediate example is obtained
when we replace each vertex of~$Z$ by an independent set of size $k$ and
every arc by a (directed) complete bipartite graph $\vec K_{k,k}$ --
formally this is the \emph{lexicographic product} $Z[\overline{K}_k]$
with $\overline{K}_k$ denoting the graph with $k$ vertices and no edges.
Confirm also Lemma~4.3 and Theorem~4.5 in~\cite{cameron_praeger_wormald_1993} for more
on products and high arc-transitivity.


The question of what other highly arc-transitive
digraphs exist has started a substantial amount of research.
The question was originally considered by Cameron, Praeger, and Wormald \cite{cameron_praeger_wormald_1993}.
They presented some nontrivial constructions (details can be found in Section~\ref{construction})
and worked on ways to describe all highly arc-transitive digraphs.
One approach to this involves the \emph{reachability relation}.

Given a digraph $D$, an \emph{alternating walk} is a sequence $(v_0, v_1, \dots, v_s)$
of vertices such that $v_i v_{i+1}$ and $v_i v_{i-1}$ are arcs of $D$
either for all even $i$ or for all odd $i$; informally, when visiting
the vertices $v_0,v_1, \dots, v_s$, we use the arcs of $D$ alternately in the forward
and backward direction. When $e$, $e'$ are two arcs of $D$, we say that
$e'$ is \emph{reachable} from $e$, in symbols $e \sim e'$, if there is an alternating walk which has
$e$ as the first arc and $e'$ as the last one.
One can easily see that this
is an equivalence relation. Moreover, this relation is preserved by any digraph automorphism.
Thus, whenever $D$ is 1-arc transitive, then the digraphs induced by
the equivalence classes are isomorphic to a fixed digraph, which will be denoted by
$R(D)$ ($R$ stands for reachability).

It is shown in \cite{cameron_praeger_wormald_1993} that if the reachability relation has more than
one class, then $R(D)$ is bipartite and a construction
is presented that, for an arbitrary directed bipartite digraph $R$, gives
a highly arc-transitive digraph $D$ with $R(D) \simeq R$. In fact,
a universal cover for all such digraphs is constructed.
Thus a question arises, whether there are highly arc-transitive digraphs
for which the reachability relation is \emph{universal} (by which it
is meant that there is just one equivalence class), as this
approach to classify highly arc-transitive digraphs would not work for them.
Actually, such digraphs are rather easy to construct if we allow
infinite degree. One example would be the digraph $Q$ whose vertex set are all
rational numbers, $V(Q)=\Q$, and two vertices $u,v$ are adjacent if $u<v$.
So, the following question was asked in~\cite{cameron_praeger_wormald_1993}.

\begin{question}\label{q:universal}
  Is there a locally finite highly arc-transitive digraph with universal
  reachability relation?
\end{question}

In Section~\ref{universal} we present a construction
of such digraphs --- showing, in effect, that highly arc-transitive digraphs
form a richer class of digraphs than one might expect.

Many highly arc-transitive digraphs possess a homomorphism onto $Z$. That is a
mapping $f:V \to \Z$ such that for every edge $uv$ we have $f(v)=f(u)+1$. This is
called \emph{property Z} in~\cite{cameron_praeger_wormald_1993}, and the authors ask, whether
all locally finite highly arc-transitive digraphs have this property.
The first examples of locally finite highly arc-transitive digraphs without property Z were constructed by Malni\v{c} et al.\ in \cite{malnicetal_2002}.
Our digraphs with universal reachability relation provide further examples, as a digraph with property Z has infinitely many reachability classes.

\bigskip

Another approach to classify highly arc-transitive digraphs is to use the
number of \emph{ends}. (See \cite{Diestel_2005} for the definition of an end of a graph.)

It is well known that every infinite vertex-transitive graph, and hence also every
highly arc-transitive digraph, has 1, 2, or infinitely many ends.
An example with two ends is $Z$, with infinitely many ends a tree
(where the in-degree of all vertices is some constant $d^-$ and
the out-degree of all vertices is some constant $d^+$).
An example of a highly arc-transitive digraph with just one end is~$Q$. 
Locally finite examples are known, but they are harder to construct. 
In a few words, one can construct them as horocyclic products of trees, see \cite{moeller_descendants_2002} 
for details. 

Let us focus on two-ended digraphs. This class includes the afore-mentioned basic
examples $Z$ and $Z[\overline{K}_k]$, as well as a more
complicated construction by McKay and Praeger \cite[Remark~3.4]{cameron_praeger_wormald_1993} that is also
discussed in our Section~\ref{construction} as Construction~1. This construction was
generalized in~\cite[Definition 4.6]{cameron_praeger_wormald_1993}. 

Based on their generalization and the lack of other examples, it was conjectured 
in \cite{cameron_praeger_wormald_1993} that for each connected 
highly arc-transitive digraph $D$ with two ends, the reachability
digraph $R(D)$ is either infinite, or
a complete bipartite digraph. We disprove this conjecture in Section~\ref{construction},
where we present several constructions that behave in a more complicated way. 
Independently from us, Christoph Neumann has constructed 
counterexample to Conjecture~\ref{simplehats} using a different method. 

Finally, in Section~\ref{structure} we work towards
characterizing all two-ended highly arc-transitive digraphs. 
We show, in particular, that every such digraph 
either admits a quotient by which we can reduce it to a simpler structure, 
or some lexicographic product $G[\overline{K}_k]$ (digraph $G$ with cloned vertices) 
can be constructed by a rather complicated Construction~4 described in Section~\ref{structure}.
This construction uses a finite digraph with colored edges as a `template'.
While this construction provides many complicated new examples and is shown to be universal
(upto cloning of vertices), we are lacking full understanding of when precisely it gives rise to 
a highly arc-transitive digraph.


\section{Highly arc-transitive digraphs with universal reachability relation}
\label{universal}

The following result answers Question~\ref{q:universal} in the affirmative.

\begin{theorem} \label{main}
There is a locally finite highly arc-transitive digraph for which
the reachability relation is universal.
In fact, for every composite integer $d\ge 4$ there is
such digraph with all in-degrees and all out-degrees equal to~$d$.
\end{theorem}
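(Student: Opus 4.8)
The plan is to build the digraph as a Cayley-like digraph, or more precisely as a quotient of a tree-like object, in which the reachability classes are forced to merge because the ``building block'' bipartite graph is connected. The key conceptual point is that the reachability relation is universal precisely when, starting from one arc and taking alternating walks, one can reach every arc; if $D$ has a homomorphism onto $Z$ (property Z) this is impossible, so the construction must avoid property Z, and indeed must have the alternating-walk structure be ``spread out'' in a controlled way.

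\medskip\noindent\textbf{The construction.} Fix a non-prime $d = ab$ with $a,b \ge 2$. I would take as vertex set something like pairs $(x, i)$ where $x$ ranges over a suitable infinite abelian group (a restricted product of copies of $\Z/a$ or $\Z/ab$) and $i \in \Z/k$ for a small modulus $k$, with arcs defined by a finite ``transition rule'' that is homogeneous enough to guarantee $s$-arc transitivity for all $s$. Concretely, a natural candidate is to start from the line digraph iterates of a bipartite ``de Bruijn-type'' digraph on the group and then pass to a quotient by a subgroup chosen so that (i) all in- and out-degrees stay equal to $d$, (ii) the automorphism group still acts transitively on $s$-arcs (this typically follows because the group of translations acts, together with a ``digit-permuting'' symmetry that realizes the $s$-arc transitivity on the local tree), and (iii) the quotient identifies the two ``sides'' that property Z would separate, collapsing the reachability relation. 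The role of $d$ being composite is exactly to allow the degree-$d$ bipartite block to be refined through an intermediate factorization $d = ab$, which is what lets the alternating walks wander between all vertices.

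\medskip\noindent\textbf{Verification steps, in order.} First I would define $D$ precisely and check it is locally finite with all in- and out-degrees equal to $d$. Second, I would exhibit enough automorphisms: the translation action of the underlying group, plus automorphisms permuting the ``digits'' of the label, and show that the stabilizer of a vertex acts transitively on out-neighbours and on in-neighbours, and more strongly that for every $s$ the group acts transitively on $s$-arcs — the cleanest way is to show the digraph ``looks like a tree'' locally (the ball of radius $s$ around any arc, as an arc-labelled structure, is independent of the arc) and that automorphisms can be built to match any two such balls, extending by a compactness/back-and-forth argument using the group action. Third, and this is the heart of the matter, I would prove the reachability relation is universal: take any arc $e$, and show by an explicit sequence of alternating walks — essentially doing ``carries'' in the digit representation, using $d = ab$ — that one reaches an arbitrary arc $e'$; equivalently, show that the subgroup of the automorphism/translation group generated by the reachability moves is everything, so there is a single class.

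\medskip\noindent\textbf{Main obstacle.} The delicate tension is between making the digraph homogeneous enough for \emph{high} arc-transitivity and making it ``twisted'' enough that reachability is universal: high arc-transitivity wants the digraph to be as tree-like and unconstrained as possible (which pushes toward property Z and hence infinitely many reachability classes), while universality of reachability demands the opposite kind of global identification. So the main work will be choosing the quotient subgroup (the modulus $k$ and how the digits are reduced) so that the $s$-arc transitivity survives the quotient for \emph{all} $s$ simultaneously — it is easy to kill $s$-arc transitivity for large $s$ by quotienting carelessly — while the quotient is still nontrivial enough to merge the reachability classes. I expect verifying $s$-arc transitivity for all $s$ after the quotient (step two above) to be the technically hardest part, with the universality argument (step three) being a comparatively concrete, if fiddly, digit-arithmetic computation.
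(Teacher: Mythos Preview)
Your proposal is not a proof but a plan, and the plan is pointed in the wrong direction. You never actually define the digraph: ``a restricted product of copies of $\Z/a$'', ``line digraph iterates of a de Bruijn-type digraph'', and ``pass to a quotient by a subgroup chosen so that \ldots'' are not definitions, and the three verification steps you list are each contingent on choices you have not made. More seriously, your diagnosis of the central tension is off. You write that high arc-transitivity ``wants the digraph to be as tree-like as possible, which pushes toward property~$Z$'', and therefore that a carefully chosen \emph{quotient} is needed to collapse the reachability classes while somehow preserving $s$-arc transitivity for all~$s$. You correctly identify that surviving the quotient is the hard part of your plan --- but in fact no quotient is needed at all, and the difficulty you anticipate simply does not arise.

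The paper's construction is fully tree-like and involves no quotient. Take the infinite tree $T_{a,b}$ whose vertices are two-coloured so that $A$-vertices have degree~$a$ and $B$-vertices have degree~$b$ (with $a,b\ge 3$), let $V(G)=E(T_{a,b})$, and for each tree-edge $e=uv$ with $u\in A$, $v\in B$ put an arc from every edge incident with~$u$ (other than~$e$) to every edge incident with~$v$ (other than~$e$). Then every vertex has in- and out-degree $(a-1)(b-1)$, and since any non-prime $d\ge 4$ factors as $(a-1)(b-1)$ with $a,b\ge 3$, this covers the stated range. High arc-transitivity is immediate: an $s$-arc in~$G$ determines a path of length $2s+1$ in $T_{a,b}$ starting at a $B$-vertex, and tree automorphisms act transitively on such paths. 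Universality of reachability is a short local argument: arcs based at tree-edges sharing an $A$-vertex are linked by an explicit alternating walk of length three (here $a\ge 3$ is used), and the $B$-vertex case reduces to the $A$-vertex case using $b\ge 3$. The point you missed is that a tree-based construction can fail property~$Z$ \emph{without any quotient}, because the orientation of arcs in~$G$ is governed by the bipartition of~$T_{a,b}$ rather than by any height function; this is what lets alternating walks roam freely while the tree's homogeneity still delivers $s$-arc transitivity for free.
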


\begin{figure}[ht]
  \begin{center}
    \includegraphics[scale=1,page=1]{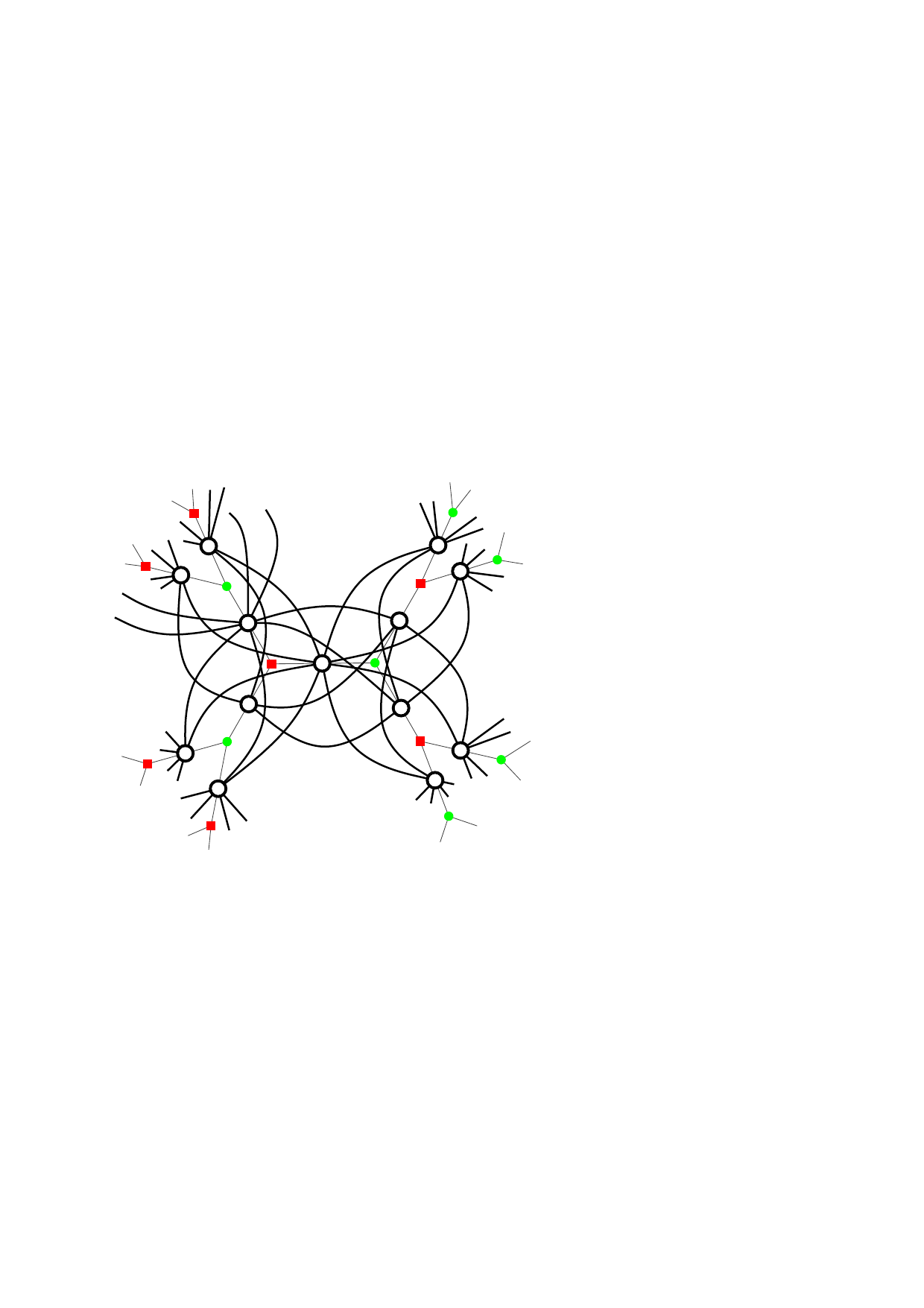}
  \end{center}
  \caption{The digraph $G_{3,3}$ -- a part of the digraph
   (with the underlying tree), without directions of edges.
   Vertices of the set~$A$ are small circles, vertices of~$B$ are squares.} 
  \label{fig:constrtree}
\end{figure}

\begin{figure}
    \includegraphics[scale=1,page=2]{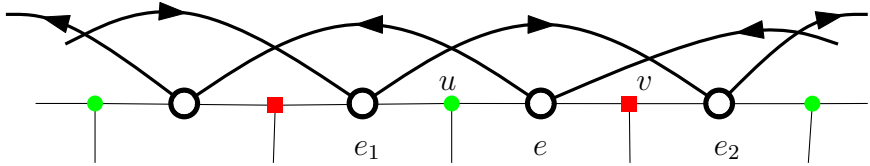}
  \caption{The digraph $G_{3,3}$ -- description of the direction of edges.
   Vertices of the set~$A$ are small circles, vertices of~$B$ are squares.
  }
  \label{fig:constr}
\end{figure}

\def\bfe{{\bf e}}

\begin{proof}
Pick integers $a, b \ge 3$. We will construct a digraph $G_{a,b}$, in which 
every vertex has in- and out-degree equal to $(a-1)(b-1)$ and which satisfies 
the conditions of the theorem. 
Let $T=T_{a,b}$ be the infinite tree with vertex set $A \mathbin{\dot{\cup}} B$, where
every vertex in~$A$ has $a$ neighbours in~$B$, and
every vertex in~$B$ has $b$ neighbours in~$A$.
Next, we define the desired digraph with $V(G_{a,b}) = E(T_{a,b})$.
For each $e=uv \in E(T_{a,b})$, where $u \in A$, $v\in B$, we add an arc from each
$e_1\ne e$ incident with $u$ to each $e_2\ne e$ that is incident with $v$.
For each such pair $e_1,e_2$ we put $c(e_1,e_2) := e$. We let $G=G_{a,b}$ be the resulting
digraph; in Fig.~\ref{fig:constrtree} and~\ref{fig:constr} we display part of~$G_{3,3}$.

First we prove that $G$ is highly arc-transitive. Suppose
$\bfe=(e_0,e_1,\dots,e_s)$ is an $s$-arc in~$G$, and let
$P(\bfe)$ be $e_0, c(e_0,e_1), e_1, \dots, c(e_{s-1},e_s), e_s$,
the corresponding path in~$T$.
Now let $\bfe'$ be another $s$-arc in~$G$.
Obviously $P(\bfe)$ and $P(\bfe')$ are paths in $T$ of the
same length, both starting at a vertex of~$B$. Consequently, there is an
automorphism $\varphi$ of~$T$ that maps $P(\bfe)$ to $P(\bfe')$.
The mapping that $\varphi$ induces on $E(T) = V(G)$ is clearly an automorphism of~$G$
that sends $\bfe$ to $\bfe'$.

We still need to show that the reachability relation of $G$~is universal. Suppose
$e, e' \in V(G)$ are adjacent as edges in~$T$, and that $h$ (resp. $h'$) is an
arc of~$G$ starting at $e$ (resp. $e'$). We will show that $h \sim h'$; this is
clearly sufficient. Assume first that $e$ and $e'$ share a vertex of~$A$.
Let $h_1$, $h_2$ be arcs of~$G$ as depicted in the left part of Fig.~\ref{fig:equiv}
(recall that $a \ge 3$). Obviously $h, h_1, h_2, h'$ is an alternating walk, thus
$h \sim h'$. Secondly, assume $e$ and $e'$ share a vertex of~$B$.
In this case pick arcs $h_1$, $h_2$ according to the right part of Fig.~\ref{fig:equiv},
utilizing that $b \ge 3$. Now $h \sim h_1$ and $h_2 \sim h'$ according to the
first case. This shows that $h_1 \sim h_2$ and completes the proof.
\end{proof}

\begin{figure}[ht]
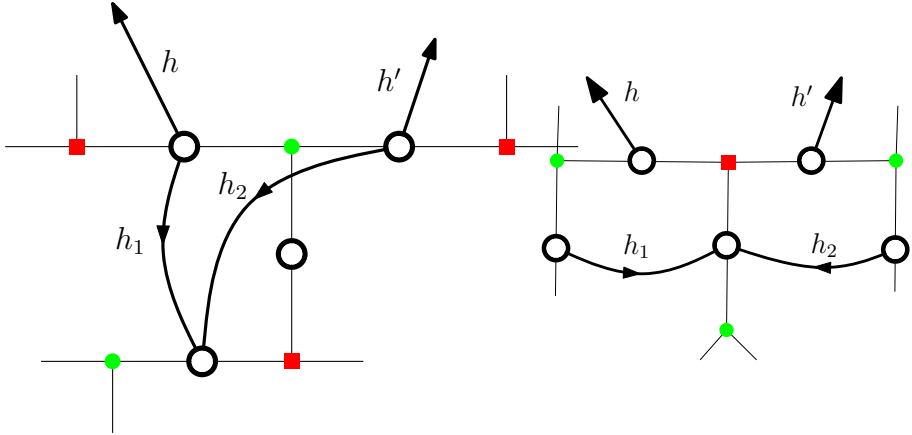

 \centerline{
   \hss
   \parbox[c]{7cm}{\includegraphics[scale=1,page=4]{hatfig}}
   \hfill
   \parbox[c]{6cm}{\includegraphics[scale=0.9,page=3]{hatfig}}
   \hss
 }
 \caption{Two arcs of~$G_{a,b}$ that start at adjacent edges of~$T_{a,b}$
   are equivalent.}
  \label{fig:equiv}
\end{figure}

\paragraph{Remark:}
It is known that highly arc-transitive digraphs with universal reachability relation
do not exist if indegrees $d^-$ and outdegrees $d^+$ are not the same \cite{praeger_homomorphic_1991},
and neither they exist if $d^+ = d^-$ is a prime \cite{cameron_praeger_wormald_1993}.
However, whenever $d^+ = d^-$ is not a prime, it can be written as $(a-1)(b-1)$
for $a, b \ge 3$, so Theorem \ref{main} provides an example of such a digraph.

Note that the structure of the digraph $G_{a,b}$ can also be described as
follows. Consider a partition of the vertices of~$K_{a,a(b-1)}$ into $a$ copies of
a star, $K_{1,b-1}$. Let us denote these copies by $S_1$, \dots, $S_a$.
We let $H$ be $K_{a,a(b-1)} - \cup_i E(S_i)$.
Then we take countably many copies of~$H$ and glue them together
(in a tree-like fashion) by identifying in pairs the sets corresponding to some of the~$S_i$'s.
From this description it is immediate that $G_{a,b}$ has universal
reachability relation.


\section{Two-ended constructions}
\label{construction}

As mentioned in the introduction, a highly arc-transitive digraph can have 1,~2, or infinitely
many ends; in the rest of this paper we concentrate on the case of two ends.
It is not hard to show (see the proof of Proposition~\ref{Zsystem}) that every
two-ended 1-arc transitive digraph $D$ has the following structure:
the vertices can be partitioned as $V(D) = \bigcup_{i=-\infty}^\infty V_i$ and all arcs
go from some $V_i$ to $V_{i+1}$. Moreover, if $D$ is also vertex-transitive,
then each of the induced digraphs $B_i=D[V_i \cup V_{i+1}]$
is isomorphic to a fixed bipartite `tile' $B$.
If $B$ is a complete bipartite digraph $\vec K_{k,k}$, we get the basic
example $Z[\overline{K}_k]$. If $B$ is not the complete bipartite digraph,
then $D$ is not determined just by $B$, as we need to specify how are
consecutive copies $B_i$ and $B_{i+1}$ of $B$ `glued' together at $V_{i+1}$.
It is easy to see that all components of $B$ are isomorphic to the reachability digraph $R(D)$.
The following was conjectured in \cite{cameron_praeger_wormald_1993}.

\begin{conjecture}[Cameron, Praeger, and Wormald \cite{cameron_praeger_wormald_1993}]
\label{simplehats}
If $D$ is a con\-nect\-ed highly arc-transitive digraph such that there exists a homomorphism
$f: D \to Z$ and $f^{-1}(0)$ is finite, then $R(D)$ is a complete
bipartite digraph.
\end{conjecture}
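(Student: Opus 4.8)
The plan is to first pin down the global shape of $D$ and then try to force the tile. Since $D$ is highly arc transitive it is in particular vertex- and $1$-arc-transitive, so the levels $V_n:=f^{-1}(n)$ are all finite of one common size $k$, every arc runs from some $V_n$ to $V_{n+1}$, and each consecutive tile $D[V_n\cup V_{n+1}]$ is isomorphic to a single bipartite digraph $B$ on $k+k$ vertices whose components are all copies of $R(D)$ --- this is the structure recalled just before the statement (cf.\ Proposition~\ref{Zsystem}). Moreover every automorphism of $D$ shifts the function $f$ by a constant (two homomorphisms $D\to\Z$ differ by a constant on connected $D$), hence preserves the level partition whenever it fixes a level. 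Since an alternating walk can never leave the two levels in which it starts, the reachability classes of $D$ are exactly the edge sets of the components of one tile, and the statement becomes equivalent to the purely local assertion: \emph{$B$ is a disjoint union of directed complete bipartite graphs}, i.e.\ every $x\in V_0$ is joined by an arc to the whole right part of its $B$-component.

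To attempt this I would fix $v\in V_0$, set $Y:=N^+(v)\subseteq V_1$, note that any two arcs leaving $v$ are reachable from each other so that $Y$ lies inside the right part $Y^\ast$ of one reachability class $R\cong R(D)$, and try to prove $Y=Y^\ast$. The natural engine is high arc transitivity together with finiteness of the fibres. Concretely: for $y\in Y^\ast$ join $y$ to $v$ by an alternating walk inside $R$; push this walk forward along a long directed path leaving $v$ using automorphisms of $D$ (all of which respect the levels here); and hope that if $y$ were \emph{not} an out-neighbour of $v$ this would force, at some far level $V_n$, strictly more descendants of $v$ than the $k$ available, or a level-preserving automorphism of a finite tile acting with infinite order --- either way contradicting $|V_n|=k<\infty$. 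Equivalently, one would show the bijections used to glue consecutive tiles are forced to be ``untwisted'', so that each component of $D$ is just $Z\otimes\bar K_m$. A tidy way to organise this is through the descendant digraph $\operatorname{desc}(v)$: its stabiliser in $\Aut(D)$ is transitive on the $s$-arcs of $D$ issuing from $v$ for every $s$, it inherits property~Z and finite fibres, and one could try to classify such rooted objects and read off that $R(D)$ is complete bipartite (this is the point of view of \cite{moeller_descendants_2002}).

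The main obstacle is that the last step cannot be completed, because \textbf{the statement is false}. As the constructions of Section~\ref{construction} show, a finite bipartite graph that is \emph{not} a disjoint union of complete bipartite graphs can be glued along its copies in a sufficiently symmetric --- but genuinely twisted --- fashion to yield a connected highly arc transitive $D$ with finite fibres and $R(D)$ equal to that graph; so finiteness of the fibres is simply not enough leverage to trivialise the twist, and any attempted proof of Conjecture~\ref{simplehats} must break down at exactly that point. What the plan above \emph{does} deliver, under extra hypotheses --- for instance when $\Aut(R(D))$ is rich enough, or when the gluing maps are assumed to come from a direct product --- is a proof that $R(D)$ is complete bipartite in those cases; extracting as much of this forced rigidity as possible in general, short of a full characterisation, is the business of Section~\ref{structure}. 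So the honest proposal for this statement is: set up the tile/gluing picture, isolate the conjecture's content as a rigidity statement about the gluing maps, discover that the rigidity fails, and replace the conjecture by the counterexamples and partial structure theorems of the remaining sections.
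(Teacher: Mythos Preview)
Your proposal is correct and aligns with the paper: the statement is a conjecture that the paper \emph{disproves} rather than proves, and you correctly identify this, pointing to the counterexamples of Section~\ref{construction} (specifically Construction~2 with $T$ the oriented 6-cycle $\vec K_{3,3}$ minus a matching) as the refutation. The structural set-up you describe (finite fibres, common tile $B$, $R(D)$ as a component of $B$) is exactly what the paper records before stating the conjecture, and your diagnosis that the failure point is the impossibility of forcing the gluing maps to be ``untwisted'' is the right intuition for why the constructions succeed.
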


Next, we describe several constructions. We start with the one found
by McKay and Praeger \cite[Remark~3.4]{cameron_praeger_wormald_1993}, that, while nontrivial,
concurs with the above conjecture. Next, we shall present our construction (Construction~2), 
disproving the conjecture. Continuing, we shall provide some more complicated examples.
In Section~\ref{structure}, we introduce a very general construction and provide some evidence
that this construction essentially describes all two-ended highly arc symmetric digraphs.

We want to mention here that recently (and independently) 
Christoph Neumann has constructed~\cite{Neumann} counterexample to Conjecture~\ref{simplehats} 
using a different setting. His method (as well as ours) allows for many modifications 
and extensions, however his and ours smallest counterexamples are isomorphic.

\paragraph{Construction 1 (McKay and Praeger \cite[Remark~3.4]{cameron_praeger_wormald_1993})}
  Let $S$ be a finite set, $n$ a positive integer, and let $V = \Z \times S^n$.
  The set $V$ is considered as the vertex-set of the digraph in which two
  vertices $a = (i, a_1, \dots, a_n)$ and $b = (i+1, b_1, \dots, b_n)$ are
  adjacent if $a_j = b_{j+1}$ for each $j=1, \dots, n-1$; no other edges are
  present.

  Here, the digraph $B$ is a disjoint union of complete bipartite digraphs
  (more precisely, $B$ consists of $|S|^{n-1}$ copies of $\vec K_{|S|, |S|}$), thus $R(D)$ is $\vec K_{|S|, |S|}$.
  The fact that this is a highly arc-transitive digraph is easy to show directly, but also follows from our next constructions.

\paragraph{Construction 2}
  Let $T$ be a ``template'' -- an arc-transitive digraph that is bipartite with parts $A_1$, $A_2$,
  all arcs directed from $A_1$ to $A_2$. Let $D$ be the digraph with vertex-set
  $V = \Z \times A_1 \times A_2$, in which two
  vertices $(i,a_1, a_2)$, $(i+1, b_1, b_2)$ are connected if $(a_1,b_2) \in E(T)$.
  We define $V_i = \{i\} \times A_1 \times A_2\subset V$.

  It is clear from the definition that the digraph joining $V_i$ and $V_{i+1}$
  is isomorphic to the bipartite digraph $B$ which
  is obtained from $T$ by taking $|A_2|$ copies of each vertex in $A_1$
  and $|A_1|$ copies of each vertex in $A_2$, and replacing each arc in $T$ by the
  complete bipartite digraph~$\vec K_{|A_2|,|A_1|}$.
  If $T$ is connected, then $B$~is isomorphic to~$R(D)$.
  As shown by Theorem \ref{thm:Constr3hat}, the resulting digraph is highly arc-transitive.
  Thus, by taking $T$ to be $\vec K_{3,3}$ minus a matching (alternately oriented 6-cycle) we get
  a counterexample to Conjecture~\ref{simplehats}.

\paragraph{Construction 3} The next construction is a common generalization
  of Constructions 1 and~2.
  Let $T$ be a $(t-1)$-arc-transitive template digraph, with vertices in $t$ ``levels'',
  $A_1$, \dots, $A_t$ and each arc leading from $A_j$ to $A_{j+1}$ for some $j$.
  We shall denote by~$T_i$ the subgraph of~$T$ induced by~$A_i \cup A_{i+1}$. 
  Suppose that each vertex $v\in V(T)\setminus A_1$ has in-degree at least 1,
  and each vertex $v \in V(T)\setminus A_t$ has out-degree at least~1.
  Now, define a digraph $D=D(T)$ with vertex-set
  $V = \Z \times A_1 \times A_2 \times \cdots \times A_t$, in which two
  vertices $a=(i, a_1, a_2, \dots, a_t)$ and $b=(i+1, b_1, b_2, \dots, b_t)$ are
  adjacent if $(a_j,b_{j+1}) \in E(T)$
  for each $j =1, \dots, t-1$, and no other edges are present in $D$.
  Clearly, for $t=2$ we get Construction~2.
  Construction~1 of McKay and Praeger is a special case of this one, with $T$~consisting
  of~$|S|$ disjoint paths.

\begin{theorem}
\label{thm:Constr3hat}
If $T$ is as in Construction~3, then the digraph $D(T)$ is connected and highly arc-transitive.
If all graphs~$T_i$ are connected then $R(D(T))$ (equivalence class of the reachability relation) 
is isomorphic to the subgraph of~$D(T)$ induced by vertices $\{0,1\} \times A_1 \times \cdots \times A_t$. 
\end{theorem}

\begin{proof}
As before, let $V_i = \{i\} \times A_1 \times \cdots \times A_t$. For a vertex $a\in V_i$, we denote
its $j$-th component by $a_j$, starting with $a_0=i$ and having $a_j\in A_j$ for $j=1,\dots,t$.
First we show that $D=D(T)$ is connected. It is easy to see that the following statement
suffices for this:
for every $a \in V_0$ and $b \in V_t$, there is a directed $(a,b)$-path.
In order to prove this, observe that every vertex of $T$ is a part of at least one directed
path with $t$ vertices. Let $P_i$ ($Q_i$, resp.) be such a path containing $a_i$ ($b_i$, resp.).
We let $P_{i,j}$ denote the $j$-th vertex on $P_i$, so that $P_{i,i} = a_i$
(and, similarly, $Q_{i,i} = b_i$). Now we define vertices $c_0, c_1, \dots, c_t$ in $D$
forming a directed path. For $i=0,\dots,t$ we set $c_{i,0}=i$ and
$$
 c_{i,j} = \begin{cases}
             Q_{j-i+t,j} & \hbox{if $1 \le j \le i$,} \\
             P_{j-i,j} & \hbox{if $i < j \le t$. }
           \end{cases}
$$
Clearly, $c_{0,j}=P_{j,j}$ for $1\le j\le t$, thus $c_0=a$.
Similarly, $c_{t,j}=Q_{j,j}$ for $1\le j\le t$, thus $c_t=b$.
Comparing $c_i$ and $c_{i+1}$ ($0\le i<t$), we see that $c_{i,j}$ and $c_{i+1,j+1}$ are
consecutive vertices of $T$ on the same path, $P_{j-i}$ or $Q_{j-i+t}$. This shows
that $c_i$ and $c_{i+1}$ are adjacent in $D$, and shows that $D$~is connected.

Next, we study the reachability relation. Let $B = T[V_0 \cup V_1]$. Obviously, 
no alternating walk can leave~$B$; we only need to show, that any two edges  
in~$B$ are connected by an alternating walk. Let $xy$ and $uv$ be two such edges. 
Each of the bipartite graphs~$T_i$ is connected (by assumption), thus every two 
of its edges are connected by an alternating walk. We will use this for edges 
$x_i y_{i+1}$ and~$u_i v_{i+1}$ and let $a_i(j) b_{i+1}(j)$ be the $j$-th edge of 
this walk (with $j=0$ corresponding to the starting edge). 
We may assume that all of these walks are of the same length and 
each of them starts by ``fixing the head of the edge'': that is, 
for every~$i=1, \dots, t-1$ we have $b_{i+1}(0) = b_{i+1}(1) = y_{i+1}$. 
Put $b_1(0) = b_1(1) = y_1$, $b_1(j)=v_1$ for $j>1$. Put $a_t(0) = x_t$, $a_t(j)=u_t$ for $j>0$. 
Finally, put $a_0(j) = 0$ and $b_0(j)=1$ for all $j$. 
By construction, edges~$a(j)b(j)$ form an alternating walk in~$D$ connecting~$xy$ and~$uv$. 
It follows that $R(D)$~is isomorphic to~$B$.

To prove that $D$~is highly arc-transitive, we describe some of its automorphisms. 
A trivial one is a shift in the first coordinate, $\tau: a \mapsto (a_0+1, a_1, \dots, a_t)$.
More interesting automorphisms are those that preserve the levels $V_i$.
They come from the automorphisms of $T$.
Let $\phi \in Aut(T)$. Let $\psi:V(D)\to V(D)$ be the mapping that applies $\phi$
on the $j$-th coordinate in $V_j$ for $j=1,\dots,t$ and is identity elsewhere.
We shall show that $\psi$ is an automorphism of $D(T)$. Suppose $ab \in E(D)$,
but $\psi(a)\psi(b)\notin E(D)$. Since $\psi$ preserves the sets $V_i$, from 
the construction of~$D(T)$ (Construction~3) it follows, that
there exists $j = 1, \dots, t-1$ such that $\psi(a)_j \psi(b)_{j+1} \notin E(T)$.
By the definition of $\psi$, we conclude that $a\in V_j$ and $b\in V_{j+1}$. Moreover,
$\psi(a)_j = \phi(a_j)$ and $\psi(b)_{j+1} = \phi(b_{j+1})$.
By assumption, $ab\in E(D)$, so $a_j b_{j+1}$ is an edge of $T$,
and as $\phi$ is an automorphism of $T$, $\phi(a_j)\phi(b_{j+1})$ is an edge of $T$ as well.
This contradicts our assumption and proves that $\psi$ is a homomorphism $D\to D$.
Since $\psi$ is invertible (as $\psi^{-1}$ comes from the inverse automorphism $\phi^{-1}$
of~$T$ by the same construction as~$\psi$ from~$\phi$), we conclude that $\psi$ is an automorphism of~$D$.

Let $\Psi$ be the set of all automorphisms $\psi$ that are obtained from $\phi\in Aut(T)$ in
the way as described above. We claim that the group generated by $\tau$ and $\Psi$
acts transitively on the $s$-arcs in $D$ (for every~$s$). Let $(v_i)_{i=0}^s$, $(v'_i)_{i=0}^s$ be two
$s$-arcs in $D(T)$. By applying $\tau$ or $\tau^{-1}$, we may assume that
$v_0\in V_0$ and $v'_0 \in V_0$, and thus also $v_i,v'_i \in V_i$ for each $i$.
We imagine coordinates of the two arcs written in a grid: all coodinates 
of~$v_i$ ($v'_i$, resp.) in the $i$-th row. We are going to find an automorphism~$\psi_k$ 
of~$D$ such that $\psi_k(v)$ is closer to~$(v')$ than $(v)$. We shall do this by applying 
an automorphism~$\psi \in \Psi$ on an appropriate diagonal (the first diagonal in which 
$(v)$ and $(v')$ differ). Now, we make this idea precise: 

If $v_i = v'_i$ for each $i$, then we are done; otherwise find $i$ and~$j$ so that
\begin{equation*}
   (v_i)_j \ne (v'_i)_j \quad \hbox{and}\quad k=i-j \hbox{ is minimal.} \qquad (*)
\end{equation*}
We put $a_\ell = v_{\ell+k,\ell}$ if $0\le \ell+k\le s$ and $1 \le \ell \le t$. 
After that, we pick $a_\ell\in A_\ell$ (for $\ell$ such that $1 \le \ell \le t$ but 
$\ell+k < 0$ or $\ell+k > s$). The only condition now is that $a_\ell a_{\ell+1}$ 
is an arc for all~$\ell = 1, \dots, t-1$, so that $(a_\ell)_{\ell=1}^t$ is a $(t-1)$-arc in $T$.
Similarly, we define $a'_\ell$ from $v'$.
Now $(a_\ell)$, $(a'_\ell)$ are two $(t-1)$-arcs in $T$, thus (by the symmetry assumptions on~$T$)
there is an automorphism $\phi$ of $T$ such that $\phi(a_\ell) = a'_\ell$ for $\ell=1,\dots,t$.
Let $\psi$ be the automorphism of~$D$ corresponding to~$\phi$,
and let $\psi_k=\tau^k \psi \tau^{-k}$. The mapping~$\psi_k$ permutes the $j$-th coordinate in $V_{k+j}$.
Observe that $s$-arcs $(\psi_k(v_i))_{i=0}^s$ and $(v'_i)_{i=0}^s$ are
closer (so that we get larger value of $k$ in $(*)$) than for $(v_i)$ and $(v'_i)$.
So, after repeating this
procedure at most $s+t$ times we map one $s$-arc to the other.
\end{proof}

As the requirements on the template $T$ are rather strong, let us describe
a nice source of nontrivial templates. Consider a finite affine or projective space,
$AG(n,q)$ or $PG(n,q)$. Let $A_i$ be the family of subspaces of dimension $i-1$. We let
the arcs denote incidence, i.e., $(x,y)$ is an arc if and only if $x$ is a subspace of $y$
of codimension 1. This gives a template with $t=n-1$. A $(t-1)$-arc corresponds
to a flag (that is, a sequence of a subspaces one contained in another, one in each dimension).
It is not hard to show that the geometric space is flag-transitive, which implies the following. 

\begin{claim}
The template just described satisfies the conditions of Construction~3. 
\end{claim}

A natural question remains: does Construction 3 give some highly arc-transitive digraphs
that cannot be obtained by Construction~2? The answer is positive. To prove it, let us first
define the notion of \emph{clones}. Given a digraph, we call vertices $x$, $x'$ \emph{right clones}, if
they have the same outneighbours ($xy$ is an edge if and only if $x'y$ is an edge); we call them
\emph{left clones} if they have the same inneighbours. It is not hard to show that
in a highly arc-transitive digraph, all vertices have the same number $c^+$ of right clones
and the same number $c^-$ of left-clones. In Construction~2 we have $c^+ \ge |A_2|$
and $c^- \ge |A_1|$, so $c^+ c^- \ge |V_0|$.
On the other hand, using Construction~3 with a template $T$ from finite geometries we
have $c^+ = |A_t|$ and $c^- = |A_1|$. In particular, when $t>2$, we have
$c^+ c^- < |V_0|$. This shows that these highly arc-transitive digraphs cannot be
obtained by Construction~2.

\section{Structure in the two-ended case}
\label{structure}

The goal of this section is to prove a structural result concerning two-ended highly arc-transitive digraphs. Our
structure theorem will show that every two-ended highly arc-transitive digraph either admits a quotient by which we can
reduce it to a simpler structure, or up to vertex cloning, can be represented using a generalized construction which we
describe next.

\paragraph{Construction 4}
We define a
\emph{coloured template} to be a digraph $K$ equipped with a possibly improper colouring of the edges $\phi : E(K) \rightarrow \{1,\ldots,t \}$ and also equipped with a distinguished partition of
the vertices into sets $V_0, V_1, \ldots, V_m$ so that every edge goes from a point in $V_i$
to a point in $V_{i+1}$ for some $0 \le i < m$.  Given such a template $K$, we define the digraph $\widehat{K}$
to have vertex set ${\mathbb Z} \times V_0 \times V_1 \times\cdots \times V_m$ and an edge from $(i,x_0, x_1, \ldots, x_m)$ to $(i+1,y_0,y_1,\ldots,y_m)$ whenever all of the
arcs $(x_0, y_1), (x_1, y_2), \ldots ,(x_{m-1}, y_m)$ are present in $K$ and all have
the same colour.

\bigskip


It is easy to see that Construction 4 generalizes Construction 3. However, the digraphs $\widehat{K}$ are not always highly arc-transitive. In this section we shall prove
that all two-ended highly arc-transitive digraphs can be described by using Construction 4 combined with vertex-cloning operation. 
The proof of this will be built up slowly in a series of small lemmas.

Throughout this section, we shall always assume that $G$ is a highly arc-transitive digraph%
\footnote{
  Let us note that some for our lemmas hold more generally. 
}
so that the underlying undirected graph is connected and has two ends.  For any partition
${\mathcal P}$ of the vertices, we let $G^{\mathcal P}$ denote the digraph obtained from $G$
by identifying the vertices in each block of ${\mathcal P}$ to a single new vertex and then deleting
any parallel edges.  We say that a system of imprimitivity ${\mathcal B}$ is a
${\mathbb Z}$-\emph{system} if $G^{\mathcal B}$ is isomorphic to two-way-infinite directed
path.  In this case the blocks of ${\mathcal B}$ can be enumerated
$\{B_i\}_{i \in {\mathbb Z} }$ so that every edge has its tail in $B_i$ and its head in $B_{i+1}$ for some
$i \in {\mathbb Z}$.  Note that in this case, we have that for every $\phi \in Aut(G)$ there exists
$j \in {\mathbb Z}$ so that $\phi(B_i) = B_{i+j}$ for every $i \in {\mathbb Z}$.

Some of the results that follow, or parts of their proofs, can be found in
\cite{cameron_praeger_wormald_1993} or in \cite{moeller_descendants_2002}.
We include them for completeness.

\begin{proposition}\label{Zsystem}
Every connected two-ended 2-arc transitive
digraph has a unique ${\mathbb Z}$-system ${\mathcal B}$.
Furthermore, $\mathcal B$ has finite blocks of imprimitivity, and every system of
imprimitivity with finite blocks is a refinement of ${\mathcal B}$.
\end{proposition}


\begin{proof}
Every connected vertex-transitive two-ended digraph has a system of imprimitivity ${\mathcal B}$ with finite blocks and an
(infinite) cyclic relation on ${\mathcal B}$ which is preserved by the automorphism group; this follows, for instance
from Dunwoody's theorem \cite{dunwoody_cutting_1982} on cutting up graphs.
Enumerate the blocks $\{ B_i \}_{i \in {\mathbb Z}}$ so that this
cyclic relation associates $B_i$ with $B_{i-1}$ and $B_{i+1}$ for every $i \in {\mathbb Z}$.  Now, it follows from the
assumption that the digraph $G$ is arc-transitive that there exists a fixed integer $k$ so that every edge with one end in $B_i$ and
one end in $B_j$ satisfies $|i-j| = k$.  It then follows from the connectivity of the underlying graph that $k=1$.  So,
every edge has its ends in two consecutive blocks of $\{ B_i \}_{i \in {\mathbb Z}}$ .

Note that every vertex $x \in B_i$ must be adjacent in the underlying undirected graph
to both a vertex in $B_{i-1}$ and in $B_{i+1}$ (otherwise every vertex would behave
similarly, and the graph would be disconnected).  Suppose (for a contradiction) that
there exists a directed path $P$ of length two with vertex sequence $x_0, x_1, x_2$ so
that both $x_0$ and $x_2$ are contained in the same block $B_i$.  Choose a vertex $y$
which is adjacent to $x_1$ in the underlying undirected graph but is not in $B_i$.  Now
either $x_0, x_1, y$ or $y, x_1, x_2$ is the vertex sequence of a directed path of length
two; we let $P'$~denote this path.  It follows immediately that no automorphism can map~$P$ to~$P'$, 
and this contradicts the assumption of 2-arc transitivity.  Therefore, by possibly
reversing our ordering, we may assume that every edge has its tail in some block $B_i$
and its head in $B_{i+1}$.  Thus $\mathcal{B}$ is a $\mathbb{Z}$-system.

For the last part of the theorem, we let ${\mathcal C}$ be a system of imprimitivity with finite blocks, and suppose (for a contradiction) that ${\mathcal C}$ is not a refinement of ${\mathcal B}$.  Choose
a block $C$ of ${\mathcal C}$ and let $i \in {\mathbb Z}$ be the smallest integer with $B_i \cap C \neq \emptyset$ and let $j \in {\mathbb Z}$ be the largest integer with $B_j \cap C \neq \emptyset$
(and note that $i < j$).  Now choose a vertex $u \in B_i \cap C$ and $v \in B_j \cap C$ and choose an automorphism $\phi$ so that $\phi(u) = v$.  It now follows that $\phi(C) = C$ and that $\phi(B_k) = B_{k + j - i}$ for every $k \in {\mathbb Z}$, but this implies that $C$ is infinite, and thus we obtain a contradiction.
Thus, ${\mathcal C}$ must be a refinement of ${\mathcal B}$.  It follows immediately from this that the ${\mathbb Z}$-system $\mathcal B$ is unique.
\end{proof}

In the sequel, we shall work extensively with group actions; our groups shall act on the left.  For clarity, we shall always use upper case Greek letters for groups and lower case Greek letters for elements of groups.  If $\Psi$ is a group and $\Lambda \le \Psi$ we let
$\Psi / \Lambda$ denote the set of left $\Lambda$-cosets in~$\Psi$.
Further, we let $G$ be a connected two-ended highly arc-transitive digraph and
we let ${\mathcal B} = \{ B_i \}_{i \in {\mathbb Z}}$ be its ${\mathbb Z}$-system.

\begin{lemma}
There exists a nontrivial automorphism of\/ $G$ with only finitely many non-fixed points.
\end{lemma}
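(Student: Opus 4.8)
The plan is to exploit the unique $\mathbb{Z}$-system $\mathcal{B} = \{B_i\}_{i\in\mathbb{Z}}$ from Proposition~\ref{Zsystem} together with high arc-transitivity. First I would fix a reference vertex $v_0 \in B_0$ and consider the stabilizer $\Psi = \Aut(G)_{v_0}$. The key observation is that arc-transitivity gives plenty of automorphisms fixing $v_0$ but moving its neighbours, so we need to cut down to ones that fix a large \emph{connected} region pointwise. The natural objects to localize are the ``descendant'' and ``ancestor'' sets: for a vertex $x \in B_i$, let $D(x)$ be the set of vertices reachable from $x$ by directed paths (all lying in $\bigcup_{j\ge i} B_j$) and $A(x)$ the set of vertices from which $x$ is reachable (lying in $\bigcup_{j\le i} B_j$). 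Because $G$ is locally finite (each block $B_i$ is finite) I would argue that for two vertices $x \in B_0$ and $y \in B_n$ with $n$ large, the ``interval'' consisting of vertices lying on some directed $x$--$y$ path is finite, and any automorphism fixing this interval setwise and fixing $x,y$ has only finitely many non-fixed points.

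The heart of the argument is then to produce a \emph{nontrivial} such automorphism. Here I would use high arc-transitivity in the following way. Take an $s$-arc $(x_0,\dots,x_s)$ for large $s$. There are two distinct $s$-arcs that agree in their first and last several coordinates but differ somewhere in the middle (this requires that $R(G)$, equivalently some block-to-block tile, is not a single directed edge — but if every tile were a single edge then $G=Z$ and we could instead just take a translation composed with... no: actually we must handle $G=Z$ separately, where there is no nontrivial automorphism with finitely many non-fixed points, so implicitly the lemma must be using that $G \ne Z$, i.e., $|B_0| \ge 2$ or there are multiplicities; I would check that under the standing hypotheses the interesting case is $|B_0|\ge 2$, and dispatch $G\cong Z$ by noting it fails — hence the lemma as stated presumably carries an unstated assumption, or $Z$ is excluded because its reachability graph analysis elsewhere; I would flag this and proceed assuming $G \ne Z$). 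Given two $s$-arcs $\mathbf{x}, \mathbf{x}'$ agreeing outside a bounded middle window, high arc-transitivity yields $\phi \in \Aut(G)$ with $\phi(x_i) = x'_i$; by composing with the analogous automorphism in the reverse direction I would try to build an automorphism that is the identity on a long initial and final segment but nontrivial in the middle. The subtlety is that $\phi$ a priori only controls the arc, not the whole neighbourhood, so fixing an arc does not fix nearby vertices.

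The cleaner route, which I would actually pursue, is via a compactness/limit argument on the profinite-like structure of $\Psi$. Let $F_n$ be the (finite) set of vertices lying on a directed path from some fixed $u\in B_{-n}$ to some fixed $w \in B_n$ through $v_0$, and let $\Psi_n \le \Psi$ be the pointwise stabilizer of $F_n$. Each $\Psi_n$ is a finite-index-in-$\Psi_{n-1}$?--no, but the groups $\Aut(G)_{(F_n)}$ of automorphisms fixing $F_n$ pointwise form a descending chain. I claim $\Psi_n$ is nontrivial for every $n$: given the local structure, any automorphism fixing $F_n$ pointwise can still permute vertices ``off to the side'' — specifically, if $|B_0| \ge 2$, pick a vertex $z \in B_0$ not on any of the finitely many chosen paths and another such $z'$ with the same in- and out-neighbourhoods (which exist by arc-transitivity and finiteness — clones), and swap them; more robustly, use arc-transitivity on an $s$-arc entirely avoiding $F_n$ to get a nontrivial $\psi$ fixing $F_n$. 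Then $\bigcap_n \Psi_n$ might be trivial, but each individual $\Psi_n$ contains an automorphism $\psi_n$ whose support is contained in a finite set (the complement of the infinite set fixed so far is still infinite — this is where I have to be careful).

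The main obstacle, which I would address head-on, is exactly this: fixing a finite set pointwise does not bound the support of an automorphism, so I need a genuinely finitary construction. I expect the right tool is: choose $u \in B_0$ and $w \in B_1$ that are \emph{not} adjacent but have a common ``clone structure'', or directly find two vertices $a, b$ in the same block with identical in-neighbourhoods and identical out-neighbourhoods (call them twins); the transposition $(a\;b)$ is then an automorphism of $G$ with exactly two non-fixed points. Such twins exist: by high arc-transitivity and local finiteness, one shows a block $B_0$ with $|B_0|\ge 2$ that is not the all-distinct case must contain twins — or if no block has twins, then one uses a bounded-support automorphism obtained by taking an $s$-arc and its image under an automorphism that is forced (by a counting/finiteness argument on the finitely many directed paths of each length between two fixed far-apart vertices) to fix everything outside a bounded window. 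Concretely, the number of directed paths from $u \in B_{-n}$ to $w \in B_n$ is finite; an automorphism fixing $u$ and $w$ permutes these paths; the subgroup fixing all of them pointwise has finite index in $\Aut(G)_{u,w}$, but $\Aut(G)_{u,w}$ is infinite (it contains, e.g., automorphisms acting freely far from the window by arc-transitivity), so the pointwise path-stabilizer is a nontrivial group each of whose elements fixes the finite set $\bigcup(\text{paths }u\to w)$; intersecting over a nested exhaustion and using that the fixed sets grow to cover $V(G)$ while the group stays nontrivial at each finite stage, a diagonal/compactness argument in the (locally compact, since $G$ is locally finite) automorphism group produces a nontrivial automorphism fixing all but finitely many vertices. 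I anticipate the write-up will hinge on making this last compactness step precise — that $\Aut(G)$ with the permutation topology is locally compact and the relevant stabilizer chain has nontrivial ``limit''.
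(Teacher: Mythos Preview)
Your proposal circles around several ideas but never lands on a complete argument, and the approaches you do sketch have real problems. The compactness route---taking a descending chain of pointwise stabilizers and hoping for a nontrivial limit---fails for the reason you yourself note: fixing a finite set does not bound the support, and a limit of nontrivial elements in the permutation topology can perfectly well be the identity. The twins idea is a nice thought, but you give no argument that twins exist, and in fact they need not (the paper later measures clone multiplicities $c^+,c^-$ and these can equal~$1$). Your final counting sketch assumes that $\Aut(G)_{u,w}$ is infinite; this is not obvious a priori and is essentially what the lemma is trying to establish, so the reasoning is close to circular.

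The paper's proof is short and uses two ideas you are missing. First, a clean pigeonhole: with block size $k$ and out-degree $d$, choose $n$ so that $d^n > (k!)^2$. There are $d^n$ directed $n$-paths starting at a fixed $x_0\in B_0$; map a reference path to each of them by high arc-transitivity. Two of the resulting automorphisms $\phi_1,\phi_2$ must induce the \emph{same} permutation on both $B_0$ and $B_n$, hence $\psi=\phi_1\phi_2^{-1}$ is nontrivial yet fixes $B_0$ and $B_n$ pointwise. Second---and this is the step you explicitly flagged as the obstacle without resolving it---a truncation trick: because every edge runs between consecutive blocks, once $\psi$ fixes $B_0$ and $B_n$ pointwise you may simply redefine it to be the identity outside $B_1\cup\cdots\cup B_{n-1}$; the result is still an automorphism, and now has finite support. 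Your observation that the case $G\cong Z$ (i.e.\ $d=1$) must be excluded is correct and applies to the paper's argument as well.
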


\begin{proof} Let ${\mathcal B} = \{ B_i \}_{i \in {\mathbb Z}}$ be the
${\mathbb Z}$-system, and suppose that every vertex has outdegree $d$ and that each block of
${\mathcal B}$ has size $k$.  Next, choose an integer $n$ large enough so that
$d^n > (k!)^2$ and consider a directed path $P$ of length $n$ with vertex sequence
$x_0, x_1, \ldots, x_n$ with $x_i \in B_i$.  Now, there are $d^n$ directed paths of length
$n$ which start at the vertex $x_0$, and for each of them, we may choose an automorphism
which maps $P$ to this path.  Since $d^n > (k!)^2$ it follows that there must be two such automorphisms, say $\phi_1$ and $\phi_2$ which give exactly the same permutation of both $B_0$ and $B_n$.  It follows that the automorphism $\psi = \phi_1 \phi_2^{-1}$ is nontrivial, but gives the identity permutation on both $B_0$ and $B_n$.  Now, we define a mapping
$\psi' : V(G) \rightarrow V(G)$ by the following rule
\[ \psi'(x) = \left\{ \begin{array}{cl} \psi(x)	&	\mbox{if $x \in B_1 \cup B_2 \cup \cdots \cup B_{n-1}$} \\
						x	&	\mbox{otherwise.}
						\end{array} \right. \]
It is immediate that $\psi'$ is a nontrivial automorphism which has only finitely many non-fixed points, as desired.
\end{proof}

Based on the above lemma, there exists a smallest integer $\ell$ so that $G$ has a nontrivial automorphism which fixes all but $\ell+1$ blocks from the ${\mathbb Z}$-system pointwise.  It is immediate that every such automorphism must give a non-identity permutation on $\ell+1$ consecutive blocks and the identity on all others.  For every integer $i$, let $\Gamma_i$ denote the subgroup of automorphisms which pointwise fix all blocks of the ${\mathbb
Z}$-system with the (possible) exception of $B_{i-\ell}, B_{i-\ell+1}, \ldots, B_i$.  We let $\Gamma$ denote the subgroup of $\Aut(G)$ generated by $\cup_{i \in {\mathbb Z}} \Gamma_i$.

\begin{lemma}
\label{gamma_prop}
The following statements hold:
\begin{enumerate}[label=(\roman{*})]
\item If  $\alpha \in \Gamma_i$ and $\beta \in \Gamma_j$ with $i \neq j$, then $\alpha$ and $\beta$ commute.
\item If $\phi \in \Aut(G)$ satisfies $\phi(B_0) = B_k$ then $\phi \Gamma_j \phi^{-1} = \Gamma_{j+k}$ for every $j\in {\mathbb Z}$.
\item $\Gamma \triangleleft \Aut(G)$.
\end{enumerate}
\end{lemma}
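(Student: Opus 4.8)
The plan is to prove the three parts in order, as (1) and (2) feed directly into (3). For part (1): an element $\alpha \in \Gamma_i$ acts as the identity on every block except possibly $B_{i-\ell},\ldots,B_i$, and similarly $\beta \in \Gamma_j$ is the identity off $B_{j-\ell},\ldots,B_j$. If these two windows of $\ell+1$ consecutive blocks are disjoint, then $\alpha$ and $\beta$ move disjoint vertex sets and fix everything else, so they obviously commute. The real content is when $i \ne j$ but the windows overlap, say $i < j \le i + \ell$. Here I would argue as follows: consider the "overlap" blocks $B_{j-\ell},\ldots,B_i$ (those in both windows). The key observation is that $\alpha$, being the identity on $B_{i-\ell},\ldots,B_{j-\ell-1}$ (the blocks in $\alpha$'s window but before $\beta$'s window), together with high arc-transitivity and the fact that all edges point forward, forces $\alpha$ to be the identity on the overlap blocks too — because once you fix $B_{i-\ell}$ pointwise and only $\ell+1$ blocks are affected, I want to conclude the automorphism is "supported" in a way that cannot overlap with $\beta$ nontrivially. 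Actually the cleaner route: by minimality of $\ell$, any nontrivial automorphism in $\Gamma_i$ must give a \emph{non-identity} permutation on \emph{all} $\ell+1$ blocks of its window (this is stated in the text just before the lemma). So if the windows genuinely overlap in a nonempty set of blocks and both $\alpha,\beta$ were nontrivial, each would act nontrivially on the overlap; I must rule out that these actions interfere. The device is to look at $\alpha\beta\alpha^{-1}\beta^{-1}$: it fixes all blocks outside $B_{j-\ell},\ldots,B_i \cup \cdots$, and one shows it fixes fewer than $\ell+1$ blocks' worth of structure, hence by minimality of $\ell$ it is trivial, giving $\alpha\beta = \beta\alpha$. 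Making "fixes fewer than $\ell+1$ blocks" precise is the main obstacle — I expect one needs that an automorphism fixing $B_{i-\ell}$ pointwise and lying in the group generated by these local groups is determined on later blocks by a forward-propagation argument using arc-transitivity, so that the commutator's window of non-fixed blocks is strictly shorter.

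For part (2): this is essentially bookkeeping. Given $\phi \in \Aut(G)$ with $\phi(B_0) = B_k$, Proposition~\ref{Zsystem}'s remark gives $\phi(B_i) = B_{i+k}$ for all $i$. If $\gamma \in \Gamma_j$ fixes pointwise every block except possibly $B_{j-\ell},\ldots,B_j$, then $\phi\gamma\phi^{-1}$ fixes pointwise every block $\phi(B_i) = B_{i+k}$ for $i \notin \{j-\ell,\ldots,j\}$, i.e.\ every block except possibly $B_{j-\ell+k},\ldots,B_{j+k}$; hence $\phi\Gamma_j\phi^{-1} \subseteq \Gamma_{j+k}$, and applying the same with $\phi^{-1}$ gives equality. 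I'd also note this shows $\ell$ is well-defined independent of which window we center on, which is implicit already.

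For part (3): $\Gamma$ is generated by $\bigcup_i \Gamma_i$, so to show $\Gamma \triangleleft \Aut(G)$ it suffices to show $\phi \Gamma_i \phi^{-1} \subseteq \Gamma$ for every $\phi \in \Aut(G)$ and every $i$. But by part (2), $\phi\Gamma_i\phi^{-1} = \Gamma_{i+k}$ where $k$ is the shift of $\phi$, and $\Gamma_{i+k} \subseteq \Gamma$ by definition. Hence $\Gamma$ is normal. (Part (1) is not strictly needed for (3), but it is the substantive claim and will be used later in the structure theorem; I'd still prove all three here.) So the write-up is: establish (1) by the commutator/minimality argument, deduce (2) by tracking block images, and conclude (3) in two lines from (2). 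The hard part is genuinely part (1): pinning down why two overlapping local automorphisms cannot fail to commute, which I expect hinges on the rigidity statement that an element of $\Gamma$ fixing one block pointwise is forced (via high arc-transitivity and the forward orientation of edges) to be identity on a long stretch, shrinking the support of any commutator below the threshold $\ell$.
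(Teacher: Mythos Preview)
Your approach for all three parts matches the paper's, but you are making part~(1) much harder than it is. The commutator device you propose is exactly what the paper uses, and the step you flag as ``the main obstacle'' is in fact a one-line computation, not a rigidity or forward-propagation argument.

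Here is the point you are missing. Both $\alpha$ and $\beta$ preserve \emph{every} block $B_m$ setwise (indeed each is the identity outside its window, and inside its window it still maps $B_m$ to $B_m$ since it is an automorphism fixing the ${\mathbb Z}$-system). Now take any block $B_m$ that lies outside $\alpha$'s window, so $\alpha$ fixes $B_m$ pointwise. For $v\in B_m$ we have $\beta^{-1}(v)\in B_m$, hence $\alpha^{-1}\beta^{-1}(v)=\beta^{-1}(v)$, hence $\beta\alpha^{-1}\beta^{-1}(v)=v$, hence $\alpha\beta\alpha^{-1}\beta^{-1}(v)=\alpha(v)=v$. The symmetric computation works for blocks outside $\beta$'s window. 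So the commutator $\gamma=\alpha\beta\alpha^{-1}\beta^{-1}$ fixes pointwise every block that is not in \emph{both} windows. Since $i\neq j$, the intersection of the two windows contains at most $\ell$ blocks, and by the minimality of $\ell$ this forces $\gamma=1$. No appeal to high arc-transitivity, no propagation, no analysis of whether $\alpha$ and $\beta$ act nontrivially on the overlap.

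Your write-ups of (2) and (3) are fine and agree with the paper verbatim; you are also right that (1) is not needed for (3).
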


\begin{proof}
To prove claim~(i), we consider the mapping $\gamma = \alpha \beta \alpha^{-1} \beta^{-1}$.  Since $\alpha$ pointwise fixes all
blocks but $B_{i - \ell }, B_{i- \ell + 1}, \ldots, B_{i}$ and $\beta$ pointwise fixes all blocks but
$B_{j - \ell }, B_{j-\ell + 1}, \ldots, B_{j}$
the map $\gamma$ fixes pointwise any block, which is not in both of these lists.  However,
then $\gamma$ must pointwise fix all but fewer than $\ell+1$ blocks, so $\gamma$ is the identity.

For the second claim, we first note that $\phi(B_i) = B_{i+k}$ for every $i \in {\mathbb Z}$.  Now, for every
$\alpha \in \Gamma_j$ we see that $\phi \alpha \phi^{-1}$ pointwise fixes all blocks
except possibly $B_{j+k-\ell}, B_{j+k - \ell +1}, \ldots, B_{j+k}$ and it follows that
$\phi \alpha \phi^{-1} \in \Gamma_{j+k}$ which proves the claim.

To prove claim~(iii), let $\alpha \in \Gamma$ and express this element as $\alpha = \alpha_1 \alpha_2 \ldots \alpha_m$ where each
$\alpha_i$ is in a subgroup of the form $\Gamma_j$.  Now we have
$$
  \phi \alpha \phi^{-1} = (\phi \alpha_1 \phi^{-1}) (\phi \alpha_2 \phi^{-1}) \ldots (\phi \alpha_m \phi^{-1})
$$
so $\phi \alpha \phi^{-1}$ is also contained in $\Gamma$.
\end{proof}

We call a two-way-infinite directed path a \emph{line}.  The following lemma may be proved with
a straightforward compactness argument, and appears in M\"oller~\cite{moeller_descendants_2002}.

\begin{lemma}
\label{linetrans}
Let\/ $\mathbf{x}, \mathbf{y}$ be lines in $G$ with $x$ a vertex in $\mathbf{x}$ and
$y$ a vertex in $\mathbf{y}$.  Then there exists an automorphism $\phi$ of $G$ which maps
$\mathbf{x}$ to $\mathbf{y}$ and maps $x$ to $y$.
\end{lemma}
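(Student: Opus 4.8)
The plan is to prove Lemma~\ref{linetrans} by a compactness (König's lemma / inverse limit) argument, using high arc-transitivity to build finite partial isomorphisms of arbitrarily large support and then passing to the limit. First I would set up coordinates along the lines. By Proposition~\ref{Zsystem} (and the discussion of $\mathbb{Z}$-systems), enumerate the $\mathbb{Z}$-system as $\{B_i\}_{i\in\mathbb{Z}}$, and write the line $\mathbf{x}$ as $(\dots,x_{-1},x_0,x_1,\dots)$ with $x_i\in B_i$ after a shift, where $x_0=x$; similarly write $\mathbf{y}=(\dots,y_{-1},y_0,y_1,\dots)$ with $y_0=y$. Since $\mathbf{x}$ and $\mathbf{y}$ are both directed lines with $x$, $y$ at the same ``level'' (we may compose with a power of the shift automorphism $\tau$ taking $B_i$ to $B_{i+1}$, which exists by arc-transitivity), they look combinatorially identical on every finite window.

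The core step is to produce, for each $n\ge 0$, an automorphism $\phi_n$ of $G$ mapping the finite sub-path $(x_{-n},\dots,x_n)$ onto $(y_{-n},\dots,y_n)$; in particular $\phi_n(x)=y$. This is immediate from high arc-transitivity: $(x_{-n},x_{-n+1},\dots,x_n)$ and $(y_{-n},\dots,y_n)$ are both $2n$-arcs in $G$ (they are directed paths, hence the non-backtracking condition $v_{i+1}\ne v_{i-1}$ holds trivially), and $G$ is $2n$-arc-transitive for every $n$. The remaining step is the compactness/diagonalization argument to extract from $(\phi_n)_{n\ge0}$ a single automorphism $\phi$ with $\phi(\mathbf{x})=\mathbf{y}$ and $\phi(x)=y$. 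Here I would invoke local finiteness: $G$ is locally finite (each vertex has in- and out-degree finite), so for any fixed vertex $v$ there are only finitely many possible values of $\phi_n(v)$ as $n$ ranges (in fact $\phi_n(v)$ lies in the block $B_{i}$ where $v\in B_{i-?}$... more carefully, $\phi_n$ shifts levels by $0$, so $\phi_n(v)\in B_j$ whenever $v\in B_j$, and $B_j$ is finite). Thus by König's lemma applied to the tree of finite partial maps, or equivalently by a diagonal subsequence argument, there is a subsequence of $(\phi_n)$ converging pointwise to a map $\phi:V(G)\to V(G)$; one then checks $\phi$ is an automorphism (it is a limit of automorphisms, so it preserves adjacency and non-adjacency on every finite set, and its inverse is obtained the same way from $(\phi_n^{-1})$), and by construction $\phi(x_i)=y_i$ for all $i$, hence $\phi(\mathbf{x})=\mathbf{y}$ and $\phi(x)=y$.

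The main obstacle I anticipate is purely a matter of care rather than depth: making the compactness argument airtight requires knowing that the ``partial automorphism tree'' is locally finite, which uses local finiteness of $G$ together with connectivity (so that once $\phi_n$ is pinned down on a ball of radius $r$ around $x$, its values on that ball lie in a finite set, and the balls exhaust $V(G)$). One must also be slightly careful that the limit map is a \emph{bijection}: injectivity follows because for distinct $u,v$ eventually all $\phi_n$ in the subsequence are injective on $\{u,v\}$ and agree with the limit; surjectivity follows by running the same extraction on the inverses $\phi_n^{-1}$ (or by noting $\phi^{-1}$ is likewise a limit), or alternatively by a back-and-forth bookkeeping to ensure every vertex is hit. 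None of this is hard, which is exactly why the statement says it ``may be proved with a straightforward compactness argument''; the only genuine input is high arc-transitivity giving the finite approximations and local finiteness giving the compactness.
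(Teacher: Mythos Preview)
Your proposal is correct and is exactly the approach the paper has in mind: the paper does not give a proof at all, it simply remarks that the lemma ``may be proved with a straightforward compactness argument,'' and what you have written is precisely such an argument (high arc-transitivity gives automorphisms matching arbitrarily long finite subpaths, and finiteness of the blocks $B_i$ in the ${\mathbb Z}$-system supplies the compactness needed for a K\"onig/diagonal extraction of a limiting automorphism). The only cosmetic points are that your reduction to $x,y\in B_0$ should be phrased as first applying a level-shifting automorphism to $\mathbf{y}$ rather than re-indexing the ${\mathbb Z}$-system, and that for surjectivity you should pass to a single subsequence along which both $\phi_n$ and $\phi_n^{-1}$ converge before concluding that the two limits are mutual inverses.
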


\begin{lemma}
\label{decomp}
Let $\Lambda \triangleleft \Aut(G)$ and let\/ ${\mathcal C}$ be the partition of $V(G)$ given by the orbits under the action of $\Lambda$.
\begin{enumerate}[label=(\roman{*})]
\item ${\mathcal C}$ is a system of imprimitivity.
\item If $C,C' \in {\mathcal C}$ and there is an edge from $C$ to $C'$, then every vertex in
$C$ has an outneighbour in $C'$ and every vertex in $C'$ has an inneighbour in $C$.
\item $G^{\mathcal C}$ is highly arc-transitive.
\item If\/ $\mathbf{x}$ is a line in $G$, then the digraph $G_{\mathbf{x}}$ induced by the union of those blocks of ${\mathcal C}$ which contain a vertex in $\mathbf{x}$ is highly arc-transitive.
\item If\/ $\mathbf{x}$ and\/ $\mathbf{y}$ are lines in $G$, then the digraphs $G_{\mathbf{x}}$ and\/ $G_{\mathbf{y}}$ are isomorphic.
\end{enumerate}
\end{lemma}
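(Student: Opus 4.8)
\textbf{Proof plan for Lemma~\ref{decomp}.}
The plan is to establish the five parts roughly in order, with the early parts feeding the later ones. For part~1, since $\Lambda \triangleleft \Aut(G)$ and $\Aut(G)$ acts transitively on $V$, a standard argument shows the orbits of $\Lambda$ form a system of imprimitivity: if $\phi \in \Aut(G)$ and $C$ is a $\Lambda$-orbit, then $\phi(C)$ is a $\phi\Lambda\phi^{-1} = \Lambda$-orbit, so $\Aut(G)$ permutes the blocks of ${\mathcal C}$, which is exactly what is needed. For part~2, suppose there is an edge from $u \in C$ to $v \in C'$. Given any other $u' \in C$, pick $\lambda \in \Lambda$ with $\lambda(u) = u'$; then $\lambda(v) \in C'$ is an outneighbour of $u'$, and symmetrically every point of $C'$ has an inneighbour in $C$. (This already shows that in $G^{\mathcal C}$ there are no isolated-looking blocks, and that the edge relation between blocks behaves well.)

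For part~3, I would first note that by Proposition~\ref{Zsystem} the partition ${\mathcal C}$ either refines or is refined by the ${\mathbb Z}$-system ${\mathcal B}$; in either case $G^{\mathcal C}$ is again a connected two-ended digraph with a ${\mathbb Z}$-system, and crucially $\Aut(G)$ acts on $G^{\mathcal C}$ (via its action on the blocks of ${\mathcal C}$). To get high arc-transitivity of $G^{\mathcal C}$, take two $s$-arcs $(\bar v_i)$, $(\bar v'_i)$ in $G^{\mathcal C}$; using part~2 repeatedly, lift them to $s$-arcs $(v_i)$, $(v'_i)$ in $G$ with $v_i \in \bar v_i$ and $v'_i \in \bar v'_i$ (choosing each successive vertex as an appropriate out/in-neighbour inside the required block); apply high arc-transitivity of $G$ to get $\phi$ with $\phi(v_i) = v'_i$, and observe that $\phi$ descends to an automorphism of $G^{\mathcal C}$ mapping the first $s$-arc to the second. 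I expect the one point needing care here is that a walk in $G^{\mathcal C}$ that does not backtrack lifts to a walk in $G$ that does not backtrack, i.e.\ that the lift is genuinely an $s$-arc and not merely a walk; this follows because consecutive blocks of ${\mathcal C}$ lie in distinct (indeed consecutive) blocks of ${\mathcal B}$, so no cancellation can occur.

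For parts~4 and~5, the key tool is Lemma~\ref{linetrans}: any two lines can be mapped to each other by an automorphism, and such an automorphism carries the union of ${\mathcal C}$-blocks meeting one line to the union meeting the other, giving part~5 immediately (isomorphism of the $G_{\mathbf{x}}$). For part~4, fix a line $\mathbf{x}$ and let $\Sigma$ be the stabilizer of the setwise union $U$ of the ${\mathcal C}$-blocks meeting $\mathbf{x}$; more usefully, consider the subgroup of $\Aut(G)$ consisting of automorphisms fixing this union of blocks setwise and use it to act on $G_{\mathbf{x}}$. To show $G_{\mathbf{x}}$ is highly arc-transitive, take two $s$-arcs in $G_{\mathbf{x}}$; by part~3's lifting idea each lies inside a ``tube'' of consecutive blocks, and by Lemma~\ref{linetrans} (applied to lines through suitable vertices) together with high arc-transitivity of $G$, I can find an automorphism of $G$ carrying one $s$-arc to the other and carrying $\mathbf{x}$-compatibly, hence preserving $U$; restricting to $G_{\mathbf{x}}$ gives the desired automorphism. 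The main obstacle I anticipate is exactly this last step: ensuring the automorphism produced can be chosen to stabilize the block-union $U$ rather than merely mapping the arc correctly — this is where one genuinely needs Lemma~\ref{linetrans}, applied so that the line $\mathbf{x}$ (which ``pins down'' $U$, since $U$ is determined by which blocks $\mathbf{x}$ meets) is mapped to a line also contained in $U$; combined with the fact that an automorphism is determined on each block-union by which blocks of ${\mathcal C}$ it hits, this forces $U$ to be preserved.
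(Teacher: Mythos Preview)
Your plan for parts 1, 2, 3, and 5 matches the paper's proof essentially verbatim. One small correction in part~3: your appeal to Proposition~\ref{Zsystem} to conclude that $\mathcal{C}$ and $\mathcal{B}$ are comparable is neither justified (the proposition only treats systems with \emph{finite} blocks) nor needed. The point you actually require---that a lift of an $s$-arc in $G^{\mathcal C}$ is an $s$-arc in $G$---follows directly from the $\mathbb{Z}$-system on $G$ itself: any directed walk in $G$ advances strictly through the blocks $B_i$, so its vertices are distinct regardless of how $\mathcal{C}$ sits relative to $\mathcal{B}$.

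For part~4 you have the right tool (Lemma~\ref{linetrans}) and you correctly diagnose the obstacle, but the mechanism you sketch does not quite close the gap. You speak of applying Lemma~\ref{linetrans} to the line $\mathbf{x}$ and separately invoking high arc-transitivity for the $s$-arcs; but an automorphism sending $\mathbf{x}$ into $U$ need not move the given $s$-arc correctly, and an automorphism moving the $s$-arc correctly need not preserve $U$. The paper's resolution is to use part~2 to \emph{extend each of the two $s$-arcs to a full line inside $G_{\mathbf{x}}$}, say $\mathbf{y}$ and $\mathbf{y}'$. A single application of Lemma~\ref{linetrans} then produces $\phi$ with $\phi(\mathbf{y}) = \mathbf{y}'$ and $\phi(y_1) = y_1'$; since a line is rigid once one point is pinned, this forces $\phi(y_i) = y_i'$ for all $i$, and since $\mathbf{y}$ and $\mathbf{y}'$ each meet exactly the blocks comprising $U$, it forces $\phi(U) = U$. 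No separate appeal to high arc-transitivity is needed. Your phrase ``lines through suitable vertices'' hints at this, but the crucial move---extending the $s$-arcs themselves to lines that stay in $U$---is what makes it work.
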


\begin{proof}
Part~(i) is a standard fact about group actions.  For the proof, let $u,v \in V(G)$ be in the same orbit of $\Lambda$, say $u = \alpha(v)$ for $\alpha \in \Lambda$, and let $\phi$ be any automorphism.  Now, $\phi(u) = \phi \alpha (v) = \phi \alpha \phi^{-1} \phi(v)$. Since $\phi \alpha \phi^{-1}\in \Lambda$, $\phi(u)$ and $\phi(v)$ are also in
the same orbit of~$\Lambda$.

For part~(ii), choose an edge $(u,u') \in E(G)$ with $u \in C$ and $u' \in C'$.  Now, for every $v \in C$ there is an element in $\Lambda$ that maps $u$ to $v$.  Since this element must fix $C'$ setwise,
it follows that $v$ has an outneighbour in $C'$.  A similar argument
shows that every point in $C'$ has an inneighbour in $C$.

To prove~(iii), we let  $C_1, C_2, \ldots, C_k$ and $C_1', C_2', \ldots, C'_k$ be two sequences of blocks of ${\mathcal
C}$ so that both form the vertex set of a directed path in the digraph $G^{\mathcal C}$.  Using part 2 we may choose
vertex sequences $x_1,\ldots,x_k$ and $x'_1, \ldots, x'_k$ in $G$ so that $x_i \in C_i$ and $x'_i \in C'_i$ for 
$1 \le i \le k$ and so that $(x_i, x_{i+1}), (x'_i, x'_{i+1}) \in E(G)$ for $1 \le i \le k-1$.  
It follows from the high arc transitivity
of $G$ that there is an automorphism $\phi$ of $G$ so that $\phi(x_i) = x'_i$ for $1 \le i \le k$.
Then $\phi(C_i) = C'_i$ for $1 \le i \le k$  so $\phi$ induces an automorphism of $G^{\mathcal C}$
that maps $C_1, \ldots, C_k$ to $C'_1, \ldots, C'_k$.  It follows that $G^{\mathcal C}$ is highly
arc-transitive.

For the proof of~(iv), set $X$ to be the union of those blocks of ${\mathcal C}$ which contain a point of $\mathbf{x}$, and set $G'$ to be the digraph induced by $X$.  Now we let $y_1y_2\ldots y_k$ and $y'_1 y_2' \ldots y'_k$ be two paths of length $k-1$ in $G'$. It follows from part 2 that we may extend $y_1 \ldots y_k$ and $y'_1 \ldots y'_k$, respectively, to lines $\mathbf{y}$ and $\mathbf{y'}$ in $G'$.  It now follows from the previous lemma that there is an automorphism $\phi$ of $G$ which maps $\mathbf{y}$ to $\mathbf{y'}$ and further has $\phi(y_i) = y'_i$ for $1 \le i \le k$.  It then follows
that $\phi(X) = X$ so $\phi$ yields an automorphism of $G'$ which sends
$y_1, \ldots, y_k$ to $y'_1, \ldots, y'_k$.  We conclude that $G'$ is highly arc-transitive.

Part~(v) follows easily from Lemma~\ref{linetrans}.
\end{proof}

We define $G$ to be \emph{essentially primitive} if there does not exist
$\Lambda \triangleleft \Aut(G)$ so that the orbits of $\Lambda$ on $V(G)$ generate a proper nontrivial system of
imprimitivity with finite blocks which is not equal to the ${\mathbb Z}$-system.
Parts~3--5 from the  previous lemma show that any two-ended highly arc-transitive digraph which is not essentially
primitive has a type of decomposition into a highly arc-transitive subgraph and a highly arc-transitive quotient.
Although this decomposition does not seem to give us a construction, we will focus in the remainder of this section on
understanding the structure of the essentially primitive digraphs.
Note, however, that we do not know whether this is truly needed. The only examples of highly arc-transitive digraphs
that are not essentially primitive that we are aware of are a disjoint union of two highly arc-transitive digraphs
(rather trivial example) and digraphs obtained by a \emph{horocyclic product} (see~\cite{Woess-horocyc}): we have 
vertices $(i,x,y)$ for each pair of vertices $(i,x)$, $(i,y)$ of the two factors, and vertex 
$(i,x,y)$ is connected to $(i+1,x',y')$ iff both $(i,x)(i+1,x')$ and $(i,y)(i+1,y')$ are arcs in the factors. 
However, such product of
two highly arc-transitive digraphs obtained by our template construction can also be obtained by our construction using a more complicated template.

Continuing with our attempt for a structural characterization we describe orbits of the group~$\Gamma$ 
(see the definition before Lemma~\ref{gamma_prop}). 


\begin{lemma}
\label{prim}
If\/ $G$ is essentially primitive, then the orbits under the action of\/~$\Gamma$ are the blocks
$\{ B_i : i \in {\mathbb Z} \}$ of the $\mathbb Z$-system of\/ $G$.
\end{lemma}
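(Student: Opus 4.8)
The plan is to show that the orbit of each block $B_i$ under $\Gamma$ cannot properly contain $B_i$ nor merge two distinct blocks of the ${\mathbb Z}$-system, and then to invoke essential primitivity. Since $\Gamma \triangleleft \Aut(G)$ by Lemma~\ref{gamma_prop}(3), Lemma~\ref{decomp}(1) tells us that the partition ${\mathcal C}$ into $\Gamma$-orbits is a system of imprimitivity. The key observation is that each element of $\Gamma_i$ fixes $B_j$ pointwise for every $j \notin \{i-\ell, \dots, i\}$, and $\Gamma$ is generated by the $\Gamma_i$; hence no element of $\Gamma$ moves a vertex of $B_j$ out of $B_j$. Therefore ${\mathcal C}$ refines the ${\mathbb Z}$-system, i.e.\ each $\Gamma$-orbit is contained in a single $B_i$.

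**The main step.**
It remains to rule out the possibility that some (equivalently, by vertex-transitivity of $\Gamma$-action composed with $\Aut(G)$, every) block $B_i$ splits into more than one $\Gamma$-orbit. Suppose it does, so ${\mathcal C}$ is a proper refinement of the ${\mathbb Z}$-system; since $\Gamma$ is nontrivial (the minimal $\ell$ exists by the preceding lemma) the orbits are not all singletons, so ${\mathcal C}$ is a proper, nontrivial system of imprimitivity distinct from the ${\mathbb Z}$-system — contradicting essential primitivity, \emph{provided} the $\Gamma$-orbits are finite. Finiteness of the orbits follows because each $B_i$ is finite (blocks of the ${\mathbb Z}$-system are finite by Proposition~\ref{Zsystem}) and every $\Gamma$-orbit lies inside some $B_i$. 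Thus ${\mathcal C}$ is a system of imprimitivity with finite blocks, and by essential primitivity it must be either trivial, or all of $V$ in one block (impossible, as orbits lie in single $B_i$'s), or the ${\mathbb Z}$-system itself; the only remaining option consistent with the refinement is ${\mathcal C} = \{B_i : i \in {\mathbb Z}\}$.

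**The obstacle.**
The one thing that needs care — and is the crux — is ruling out the ``trivial'' alternative, namely that every $\Gamma$-orbit is a singleton, i.e.\ $\Gamma = \{\mathrm{id}\}$. But this is exactly what the previous lemma forbids: it guarantees a nontrivial automorphism fixing all but $\ell+1$ blocks pointwise, and $\ell$ is chosen minimal with this property, so $\Gamma_\ell$ (hence $\Gamma$) contains a nontrivial element. So $\Gamma$ is nontrivial, the orbits are not all singletons, and we are done. I would write this up by: (i) noting ${\mathcal C}$ is a system of imprimitivity refining the ${\mathbb Z}$-system with finite blocks; (ii) observing $\Gamma \neq \{\mathrm{id}\}$; (iii) concluding from essential primitivity that ${\mathcal C}$ is the ${\mathbb Z}$-system. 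The delicate point to state carefully is why a proper refinement of the ${\mathbb Z}$-system that is itself nontrivial is genuinely ``a proper nontrivial system of imprimitivity not equal to the ${\mathbb Z}$-system'' in the sense of the definition — this is immediate once one checks ${\mathcal C}$ is invariant under all of $\Aut(G)$, which holds because $\Gamma$ is normal.
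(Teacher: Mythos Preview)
Your argument is correct and is exactly the elaboration the paper's one-line proof (``This follows immediately from Lemma~\ref{gamma_prop}'') leaves implicit: normality of $\Gamma$ gives a system of imprimitivity, every generator of $\Gamma$ preserves each $B_i$ setwise so the orbits refine the ${\mathbb Z}$-system, and $\Gamma$ is nontrivial, whence essential primitivity forces the orbit partition to coincide with the ${\mathbb Z}$-system. One small remark: the detour through finiteness of the orbits is unnecessary, since the definition of essential primitivity in the paper places no finiteness hypothesis on the blocks --- once you know the orbit partition is proper, nontrivial, and arises from a normal subgroup, essential primitivity already forces it to equal $\{B_i\}$.
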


\begin{proof} This follows immediately from Lemma \ref{gamma_prop}.
\end{proof}

Next we shall introduce another useful subgroup of $\Aut(G)$.
Let $\Gamma_k$ ($k\in {\mathbb Z}$) and $\Gamma$ be the subgroups of $\Aut(G)$ introduced
before Lemma \ref{gamma_prop}.
As before, let $\tau$ be an automorphism of $G$ so that $\tau(B_0) = B_1$ (so, more generally,
$\tau(B_i) =  B_{i+1}$), and let $\Phi$ be the subgroup of $\Aut(G)$ which is generated by $\tau$
and $\Gamma$. We will use $\Phi$ to describe our digraph, so let us record some key features of it.
The listed properties follow easily from Lemma \ref{gamma_prop}, and the details of the proof
are left to the reader.

\begin{lemma}
\label{phi_lem}
$\mbox{}$
\begin{enumerate}[label=(\roman{*})]
\item $\tau^{-1} \Gamma_k \tau = \Gamma_{k-1}$ for every $k\in {\mathbb Z}$.
\item $\Gamma \triangleleft \Phi$.
\item $\langle \tau \rangle \cong {\mathbb Z}$.
\item $\Gamma \cap \langle \tau \rangle = \{ 1\}$.
\item $\Phi$ is a semidirect product of $\langle \tau \rangle$ and\/ $\Gamma$.
\end{enumerate}
\end{lemma}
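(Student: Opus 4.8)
The plan is to verify the five claims essentially in the order stated, since each one feeds into the next. For item~1, I would apply part~2 of Lemma~\ref{gamma_prop} directly: the automorphism $\tau$ satisfies $\tau(B_0)=B_1$, hence $\tau(B_i)=B_{i+1}$ for all $i$, so conjugation by $\tau$ sends $\Gamma_j$ to $\Gamma_{j+1}$; taking inverses (or applying the statement to $\tau^{-1}$, which shifts by $-1$) gives $\tau^{-1}\Gamma_k\tau=\Gamma_{k-1}$. Item~2 is then almost free: $\Gamma$ is generated by $\bigcup_i \Gamma_i$, and conjugation by $\tau$ permutes these generating subgroups among themselves by item~1, while conjugation by any element of $\Gamma$ preserves $\Gamma$ trivially (a group normalizes itself); since $\Phi=\langle\tau,\Gamma\rangle$, every element of $\Phi$ is a word in $\tau^{\pm1}$ and elements of $\Gamma$, and such a word conjugates $\Gamma$ into itself. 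Alternatively one can just cite part~3 of Lemma~\ref{gamma_prop}, which already gives $\Gamma\triangleleft\Aut(G)$, so a fortiori $\Gamma\triangleleft\Phi$.

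For item~3 I would argue that $\tau$ has infinite order: $\tau^n(B_0)=B_n$, so $\tau^n$ is the identity only if $n=0$, hence $\langle\tau\rangle\cong\mathbb Z$. Item~4 is the key structural point. Any element of $\langle\tau\rangle$ is some power $\tau^n$, and $\tau^n(B_i)=B_{i+n}$; on the other hand every element of $\Gamma$ fixes each block $B_i$ setwise (each generator $\Gamma_i$ fixes all blocks setwise — it only permutes vertices within the at most $\ell+1$ exceptional blocks — and this property is closed under composition). So if $\tau^n\in\Gamma$ then $\tau^n$ fixes every block setwise, forcing $n=0$; thus $\Gamma\cap\langle\tau\rangle=\{1\}$. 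Finally, item~5 combines the previous parts in the standard recognition criterion for a (internal) semidirect product: $\Gamma\triangleleft\Phi$ (item~2), $\langle\tau\rangle\le\Phi$, $\Gamma\cap\langle\tau\rangle=\{1\}$ (item~4), and $\Phi=\Gamma\langle\tau\rangle$ — the last equality holds because every generator of $\Phi$ lies in $\Gamma\langle\tau\rangle$ and, using normality of $\Gamma$, the set $\Gamma\langle\tau\rangle$ is closed under multiplication (for $\gamma_1\tau^{a}\gamma_2\tau^{b}=\gamma_1(\tau^a\gamma_2\tau^{-a})\tau^{a+b}\in\Gamma\langle\tau\rangle$) and under inverses. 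Hence $\Phi=\Gamma\rtimes\langle\tau\rangle$.

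I do not expect any serious obstacle here; the lemma is bookkeeping that sets up the coordinatization of $G$ via $\Phi$-cosets in the sequel. The one point that deserves care is the claim implicit throughout that every element of $\Gamma$ fixes each block of the $\mathbb Z$-system setwise — this is what makes items~4 and~5 work, and it should be stated explicitly as it follows from the definition of the $\Gamma_i$ (each fixes all blocks pointwise except possibly $\ell+1$ consecutive ones, where it still permutes vertices within the block, hence setwise) together with the fact that a product of block-setwise-preserving maps is block-setwise-preserving.
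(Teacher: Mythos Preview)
Your proposal is correct and matches the paper's approach: the paper's proof is simply the one line ``Immediate from Lemma~\ref{gamma_prop},'' and what you have written is precisely a careful unpacking of why each item follows from that lemma together with the block-shifting property of~$\tau$. The explicit observation you flag --- that every element of~$\Gamma$ fixes each block setwise --- is indeed the point underlying items~3--5, and your justification for it is sound.
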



Next we introduce another family of subgroups of $\Phi$.  For every $j \le k$ we define
$\overline{\Gamma}_{j..k}$ to be the subgroup of $\Gamma$ generated by
$\left( \bigcup_{i < j} \Gamma_i \right) \cup  \left( \bigcup_{i>k} \Gamma_{i} \right)$.  Note that
$\overline{\Gamma}_{0..\ell}$ is precisely the subgroup of $\Gamma$ consisting of those automorphisms which act trivially on $B_0$.

\goodbreak 
\begin{lemma}
\label{gammabar}
$\mbox{}$
\begin{enumerate}[label=(\roman{*})]
\item Every coset of $\overline{\Gamma}_{j..k}$ in $\Phi$ has a unique representation as $\tau^m \left( \prod_{i=j}^k \alpha_i \right) \overline{\Gamma}_{j..k}$ where $\alpha_i \in \Gamma_i$ for every $j \le i \le k$
(henceforth we call this the \emph{standard form}).
\item $\tau^{-1} \overline{\Gamma}_{j..k} \tau = \overline{\Gamma}_{j-1..k-1}$
\item If $A \subseteq \tau \Gamma$ then  $\overline{\Gamma}_{j..k} A = A \overline{\Gamma}_{j-1..k-1}$.
\item A set $A \subseteq \tau \Gamma$ satisfies $\overline{\Gamma}_{j..k} A \overline{\Gamma}_{j..k} = A$
if and only if $A \overline{\Gamma}_{j..k-1} = A$.
\end{enumerate}
\end{lemma}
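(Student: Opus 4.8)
The plan is to prove the four parts in order, with essentially all of the genuine content concentrated in the uniqueness half of part~1; the two main inputs are the semidirect‑product description of $\Phi$ (Lemma~\ref{phi_lem}) and the commutation/normality facts of Lemma~\ref{gamma_prop}. For \emph{part~1, existence}, given $\phi\in\Phi$ I would first write $\phi=\tau^{m}\gamma$ with $m\in\mathbb{Z}$ and $\gamma\in\Gamma$ (Lemma~\ref{phi_lem}(5)), then expand $\gamma$ as a word in the $\Gamma_i$ and, using that $\Gamma_i$ and $\Gamma_{i'}$ commute for $i\neq i'$ (Lemma~\ref{gamma_prop}(1)), rewrite it as $\bigl(\prod_{i=j}^{k}\alpha_i\bigr)\delta$ with $\alpha_i\in\Gamma_i$ and $\delta$ a product of elements of $\Gamma_i$ with $i\notin\{j,\dots,k\}$, so that $\delta\in\overline{\Gamma}_{j..k}$; this puts $\phi\overline{\Gamma}_{j..k}$ in standard form.

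For \emph{uniqueness}, suppose $\tau^{m}P\overline{\Gamma}_{j..k}=\tau^{m'}P'\overline{\Gamma}_{j..k}$ with $P=\prod_{i=j}^k\alpha_i$ and $P'=\prod_{i=j}^k\alpha'_i$. Then $P'^{-1}\tau^{\,m-m'}P\in\Gamma$; since $\Gamma\triangleleft\Phi$ and $\Gamma\cap\langle\tau\rangle=\{1\}$ (Lemma~\ref{phi_lem}(2),(4)), this forces $m=m'$, hence $P'^{-1}P\in\overline{\Gamma}_{j..k}$. Writing $P'^{-1}P=\prod_{i=j}^{k}\beta_i$ with $\beta_i\in\Gamma_i$ (again by commutation) and expanding the membership in $\overline{\Gamma}_{j..k}$ as a product of elements of $\Gamma_i$ with $i\notin\{j,\dots,k\}$, I obtain a relation $\prod_i \zeta_i=1$ over finitely many indices with $\zeta_i\in\Gamma_i$ and $\zeta_i=\beta_i$ for $j\le i\le k$. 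It then remains to prove the claim $(\star)$: if $\gamma_{i_1}\gamma_{i_2}\cdots\gamma_{i_r}=1$ with $i_1<i_2<\cdots<i_r$ and $\gamma_{i_t}\in\Gamma_{i_t}$, then every $\gamma_{i_t}=1$; applied to the relation above this gives $\beta_i=1$ for $j\le i\le k$, i.e.\ $P=P'$.

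To prove $(\star)$, examine the action on the block $B_{i_1-\ell}$. Each $\gamma_{i_t}$ with $t\ge 2$ fixes pointwise every block other than $B_{i_t-\ell},\dots,B_{i_t}$, and since $i_t>i_1$ we have $i_1-\ell<i_t-\ell$, so $\gamma_{i_t}$ fixes $B_{i_1-\ell}$ pointwise; hence the product acts on $B_{i_1-\ell}$ exactly as $\gamma_{i_1}$ does, and being the identity, $\gamma_{i_1}$ fixes $B_{i_1-\ell}$ pointwise. But then $\gamma_{i_1}\in\Gamma_{i_1}$ fixes every block except possibly the $\ell$ blocks $B_{i_1-\ell+1},\dots,B_{i_1}$, and by minimality of $\ell$ any automorphism fixing all but at most $\ell$ blocks is the identity, so $\gamma_{i_1}=1$; induction on $r$ finishes. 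I expect this invocation of the minimality of $\ell$ to be the main obstacle: one must argue cleanly that ``$\gamma_{i_1}$ is trivial on the endpoint block $B_{i_1-\ell}$'' upgrades to ``$\gamma_{i_1}$ is trivial on all but $\le\ell$ blocks'', which genuinely contradicts the definition of $\ell$ unless $\gamma_{i_1}=1$.

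For \emph{parts 2--4}: part~2 is immediate, since conjugation by $\tau$ is an automorphism of $\Phi$, so $\tau^{-1}\overline{\Gamma}_{j..k}\tau$ is generated by $\{\tau^{-1}\Gamma_i\tau:i<j\text{ or }i>k\}=\{\Gamma_{i-1}:i<j\text{ or }i>k\}$ by Lemma~\ref{phi_lem}(1), which is exactly the generating set of $\overline{\Gamma}_{j-1..k-1}$. For part~3 I would first note that each $\overline{\Gamma}_{j..k}$ is normal in $\Gamma$: every generating subgroup $\Gamma_i$ of $\overline{\Gamma}_{j..k}$ is invariant under conjugation by any $\Gamma_m$ (by commutation when $m\neq i$, trivially when $m=i$). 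Then for $a=\tau\gamma\in A\subseteq\tau\Gamma$, part~2 and normality give $\overline{\Gamma}_{j..k}\,a=\tau(\tau^{-1}\overline{\Gamma}_{j..k}\tau)\gamma=\tau\,\overline{\Gamma}_{j-1..k-1}\,\gamma=\tau\gamma\,\overline{\Gamma}_{j-1..k-1}=a\,\overline{\Gamma}_{j-1..k-1}$, and taking the union over $a\in A$ yields part~3. Finally, for part~4, since $\overline{\Gamma}_{j-1..k-1}$ and $\overline{\Gamma}_{j..k}$ are both normal in $\Gamma$ their product is the subgroup generated by the union of their generating sets, namely $\{\Gamma_i:i<j\text{ or }i>k-1\}$, which generates $\overline{\Gamma}_{j..k-1}$ (with the convention $\overline{\Gamma}_{j..j-1}:=\Gamma$ covering $k=j$); combined with part~3 this gives $\overline{\Gamma}_{j..k}A\,\overline{\Gamma}_{j..k}=A\,\overline{\Gamma}_{j-1..k-1}\overline{\Gamma}_{j..k}=A\,\overline{\Gamma}_{j..k-1}$ for every $A\subseteq\tau\Gamma$, from which the stated equivalence is immediate.
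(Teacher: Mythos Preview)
Your proof is correct and follows essentially the same approach as the paper: parts~2--4 match the paper's computation (using $\tau^{-1}\Gamma_i\tau=\Gamma_{i-1}$, normality of $\overline{\Gamma}_{j..k}$ in $\Gamma$, and the identity $\overline{\Gamma}_{j-1..k-1}\overline{\Gamma}_{j..k}=\overline{\Gamma}_{j..k-1}$) almost verbatim. For part~1 the paper simply says it ``follows immediately from the previous lemma,'' whereas you spell out the uniqueness argument via the claim~$(\star)$ and the minimality of~$\ell$; this is a genuine and correct justification of a step the paper leaves implicit.
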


\begin{proof}
The first and the second property follow immediately from the previous lemma.  For the third, choose
$A' \subseteq \Gamma$ so that $A = \tau A'$ and observe that
\[ \overline{\Gamma}_{j..k} A = \overline{\Gamma}_{j..k}\, \tau A' = \tau \, \overline{\Gamma}_{j-1..k-1} A' =
\tau A' \overline{\Gamma}_{j-1..k-1} = A \overline{\Gamma}_{j-1..k-1}. \]
To prove the last property it is enough to observe that for $A \subseteq \tau \Gamma$
$$
  \overline{\Gamma}_{j..k} A \overline{\Gamma}_{j..k} =
  A \overline{\Gamma}_{j-1..k-1} \overline{\Gamma}_{j..k} =
  A \overline{\Gamma}_{j..k-1} \,.
$$
\end{proof}

The only additional ingredients required for our structure theorem are some standard properties of vertex-transitive digraphs.  
Let $\Psi$ be a group, $\Lambda$ a subgroup of $\Psi$, and suppose set $A \subseteq \Psi$ satisfies 
$\Lambda A \Lambda = A$.  Then we define the \emph{Cayley coset digraph}
$\Cayley( \Psi / \Lambda, A)$ to be the digraph whose vertex-set are the left
cosets $\Psi / \Lambda$, where there is an edge from $g\Lambda$ to $h\Lambda$ if and
only if $\Lambda g^{-1}h\Lambda \subseteq A$.  The group
$\Psi$  has a natural action on the vertices by left multiplication, and this action
preserves the edges, and is transitive.  The following well-known result of Sabidussi
\cite{Sabidussi_1964} shows that
every vertex-transitive digraph is isomorphic to a Cayley coset digraph.  Here, if $\Psi$
acts on a set $X$ and $u \in X$ we let $\Psi_u = \{ \gamma \in \Psi : \gamma(u) = u \}$
denote the point stabilizer of $u$.

\begin{proposition}
\label{cay-coset}
Let $H$ be a digraph, let $u \in V(H)$  and let $\Phi \le \Aut(H)$ act transitively on $V(H)$.  Then
there exists $A \subseteq \Phi$ so that $H \cong \Cayley(\Phi/\Phi_u, A)$, and this isomorphism may be chosen so that the vertex $u$ corresponds to the trivial coset $\Phi_u$.
\end{proposition}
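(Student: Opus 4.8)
The plan is to prove this by constructing the isomorphism explicitly from the group action, following the classical orbit--stabilizer recipe adapted to the setting where edges go between cosets. First I would set $\Lambda = \Phi_u$ and define a map $\Theta : V(H) \to \Phi/\Phi_u$ as follows: since $\Phi$ acts transitively, for each $v \in V(H)$ there is some $\gamma \in \Phi$ with $\gamma(u) = v$; send $v$ to the coset $\gamma \Phi_u$. The standard orbit--stabilizer argument shows this is well-defined (two such $\gamma$'s differ by an element of $\Phi_u$ on the right) and is a bijection. It visibly sends $u$ to $\Phi_u = 1 \cdot \Phi_u$, which is the required normalization. The remaining content is to choose $A$ so that $\Theta$ becomes a graph isomorphism, i.e.\ so that $(v,w) \in E(H)$ if and only if $\Theta(v)^{-1}\Theta(w) \subseteq A$.

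The natural choice is $A = \{ \gamma \in \Phi : (u, \gamma(u)) \in E(H) \}$, the set of all automorphisms in $\Phi$ carrying $u$ to an out-neighbour of $u$. I would first check $A$ is a double coset union, i.e.\ $\Lambda A \Lambda = A$: if $\lambda_1, \lambda_2 \in \Phi_u$ and $\gamma \in A$, then $\lambda_1 \gamma \lambda_2$ maps $u = \lambda_2(u) \mapsto \gamma(u) \mapsto \lambda_1\gamma(u)$, and since $\lambda_1$ is an automorphism fixing $u$, $(u, \lambda_1\gamma(u)) \in E(H)$ iff $(u,\gamma(u)) \in E(H)$; hence $\lambda_1\gamma\lambda_2 \in A$. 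So $\Cayley(\Phi/\Phi_u, A)$ is defined. Then I would verify the edge condition: suppose $v = \gamma_v(u)$ and $w = \gamma_w(u)$. There is an edge in the Cayley coset graph from $\gamma_v\Phi_u$ to $\gamma_w\Phi_u$ iff $(\gamma_v\Phi_u)^{-1}(\gamma_w\Phi_u) = \Phi_u \gamma_v^{-1}\gamma_w \Phi_u \subseteq A$. By the double-coset property of $A$ this containment holds iff $\gamma_v^{-1}\gamma_w \in A$, i.e.\ iff $(u, \gamma_v^{-1}\gamma_w(u)) \in E(H)$. Applying the automorphism $\gamma_v$, this holds iff $(\gamma_v(u), \gamma_w(u)) = (v,w) \in E(H)$. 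This is exactly what we want, so $\Theta$ is an isomorphism $H \cong \Cayley(\Phi/\Phi_u, A)$.

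The main thing to be careful about — and the one genuine subtlety rather than routine bookkeeping — is the equivalence ``$\Phi_u \delta \Phi_u \subseteq A \iff \delta \in A$'' used in the edge-condition check. The backward direction is trivial; the forward direction needs $\delta \in \Phi_u \delta \Phi_u$ (clear, taking identity cosets) so that $\delta \in A$. Thus both directions hold as soon as $A$ is genuinely a union of double cosets, which we established. A second point worth a sentence is the direction convention: $H$ here is a digraph, so ``edge from $Q$ to $R$'' must be matched with ``$(u,\gamma(u))$ is an arc'' (tail $u$, head $\gamma(u)$) consistently throughout; reversing this would give the opposite orientation. With these conventions fixed, everything else is the standard translation between transitive actions and coset spaces, and no further difficulty arises.
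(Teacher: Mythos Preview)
Your proof is correct and follows essentially the same approach as the paper's: both identify $V(H)$ with $\Phi/\Phi_u$ via the orbit--stabilizer bijection, take $A$ to be the set of $\gamma \in \Phi$ with $(u,\gamma(u)) \in E(H)$, check that $A$ is a union of double $\Phi_u$-cosets, and then transport a general edge $(v,w)$ back to one based at $u$ by applying $\gamma_v^{-1}$. Your write-up is in fact more explicit than the paper's sketch about the equivalence $\Phi_u\delta\Phi_u \subseteq A \iff \delta \in A$ and about the orientation convention, but there is no substantive difference in method.
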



Let us recall that \emph{cloning} a vertex in a digraph $G$ means the operation of adding a new
vertex $v'$ whose inneighbours (and outneighbours) are precisely the inneighbours (and the outneighbours) of $v$. Also, let us note that the digraph obtained from $G$ by cloning each vertex
$k-1$ times is just the lexicographic product $G[\overline{K}_k]$ of $G$ with the empty graph on
$k$ vertices.

\begin{proposition}
\label{cay-subgroup}
Let $G = Cayley(\Phi/ \Lambda, A)$ and let $\Lambda' \le \Lambda$ with $[\Lambda : \Lambda'] = k$.  Then $G' = Cayley( \Phi / \Lambda', A)$ is a Cayley coset digraph which is isomorphic to the digraph obtained from $G$ by cloning each vertex $k-1$ times.
\end{proposition}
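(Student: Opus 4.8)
The plan is to exhibit an explicit bijection between $V(G') = \Phi/\Lambda'$ and the vertex set of the cloned graph, and then check that adjacency is preserved. First I would observe the natural surjection $\pi : \Phi/\Lambda' \to \Phi/\Lambda$ sending $\alpha\Lambda'$ to $\alpha\Lambda$; each fibre $\pi^{-1}(\alpha\Lambda)$ has exactly $[\Lambda:\Lambda']=k$ elements, corresponding to the $k$ left $\Lambda'$-cosets contained in the single $\Lambda$-coset $\alpha\Lambda$. This is precisely the ``$k$-fold blow-up'' structure on the vertex set: the graph obtained from $G$ by cloning each vertex $k-1$ times has vertex set $\{(v,t) : v \in V(G),\ 1\le t\le k\}$, with $(v,t)$ and $(w,t')$ adjacent iff $vw \in E(G)$ (independent of $t,t'$). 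So the bijection identifies the fibre over $v = \alpha\Lambda$ with the $k$ clones of $v$.

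Next I would verify that $A$ still satisfies the hypothesis needed to define $\Cayley(\Phi/\Lambda', A)$, namely $\Lambda' A \Lambda' = A$; this is immediate from $\Lambda' \le \Lambda$ and $\Lambda A \Lambda = A$, since $\Lambda' A \Lambda' \subseteq \Lambda A \Lambda = A$ and $A = 1\cdot A\cdot 1 \subseteq \Lambda' A \Lambda'$. Hence $G' = \Cayley(\Phi/\Lambda', A)$ is well-defined and a Cayley coset graph, which is the first assertion.

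For the adjacency check, recall that in $\Cayley(\Phi/\Lambda', A)$ there is an edge from $\alpha\Lambda'$ to $\beta\Lambda'$ iff $(\alpha\Lambda')^{-1}\beta\Lambda' = \Lambda'\alpha^{-1}\beta\Lambda' \subseteq A$, and similarly in $G$ there is an edge from $\alpha\Lambda$ to $\beta\Lambda$ iff $\Lambda\alpha^{-1}\beta\Lambda \subseteq A$. The key point is that these two conditions are equivalent: since $\Lambda A \Lambda = A$, a set of the form $\Lambda'\gamma\Lambda'$ is contained in $A$ if and only if $\Lambda\gamma\Lambda$ is contained in $A$. Indeed the forward direction follows by multiplying on both sides by $\Lambda$ and using $\Lambda A \Lambda = A$ together with $\Lambda'\subseteq\Lambda$; the reverse is trivial from $\Lambda'\gamma\Lambda' \subseteq \Lambda\gamma\Lambda$. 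Consequently adjacency of $\alpha\Lambda'$ and $\beta\Lambda'$ in $G'$ depends only on the images $\alpha\Lambda$, $\beta\Lambda$, and holds exactly when those images are adjacent (or, in the case $\alpha\Lambda = \beta\Lambda$, when $G$ has a loop at that vertex — which does not occur here since $G$ is loopless, so distinct clones of a vertex are non-adjacent, matching the cloning operation). This is exactly the description of the $(k-1)$-fold vertex cloning of $G$, completing the proof.

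The main obstacle, such as it is, is purely bookkeeping: making sure the equivalence ``$\Lambda'\gamma\Lambda' \subseteq A \iff \Lambda\gamma\Lambda \subseteq A$'' is stated and used cleanly, and being careful about the loop/no-loop convention so that the ``cloning'' operation (which produces an independent set of clones, with no edges among clones of the same vertex) is correctly matched — this relies on $G$ being loopless, which holds throughout the paper. Everything else is a routine unwinding of the definition of the Cayley coset graph from Proposition~\ref{cay-coset}.
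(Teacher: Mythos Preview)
Your proof is correct and follows essentially the same approach as the paper: both arguments show that adjacency in $G'$ between $\alpha\Lambda'$ and $\beta\Lambda'$ depends only on the ambient $\Lambda$-cosets $\alpha\Lambda$, $\beta\Lambda$, using the invariance $\Lambda A\Lambda = A$. Your write-up is in fact more careful than the paper's sketch, explicitly verifying that $\Lambda' A\Lambda' = A$ so that $G'$ is a genuine Cayley coset graph, and handling the loopless convention so that clones of the same vertex remain non-adjacent.
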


\begin{proof} (sketch) By definition, in the digraph $G'$ there will be an edge from $Q \in \Phi/\Lambda'$ to
$R \in \Phi / \Lambda'$ if $Q^{-1}R \subseteq A$.  If $R$ and $R'$ lie in the same $\Lambda$-coset then
$Q^{-1}R \Lambda = Q^{-1} R' \Lambda$.  Since $A \Lambda = A$, it follows that there is an edge from
$Q$ to $R$ if and only if there is an edge from $Q$ to $R'$.  So, two vertices which lie in the same $\Lambda$-coset
will have the same inneighbours.  A similar argument shows that they have the same outneighbours.  Thus, $G'$ is
isomorphic to the digraph obtained from $G$ by cloning each vertex exactly $k-1$ times.
\end{proof}

\begin{theorem}
If a two-ended highly arc-transitive digraph $G$ is essentially primitive, then there exists a digraph $G^+$ obtained from $G$
by cloning each vertex the same (finite) number of times and a coloured template $K$ so that
$G^+ \cong \widehat{K}$.
\end{theorem}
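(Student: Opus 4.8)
The plan is to represent $G$ as a Cayley coset graph for the group $\Phi$ acting on $V$, pick the right stabilizer subgroup, and then unwind the standard-form description of $\Phi$-cosets from Lemma~\ref{gammabar} to read off a coloured template. First I would check that $\Phi$ acts transitively on $V$: by Lemma~\ref{prim} the orbits of $\Gamma$ are exactly the blocks $\{B_i\}$, and $\tau$ permutes the $B_i$ transitively, so $\langle \tau, \Gamma\rangle = \Phi$ is vertex-transitive. Fix a base vertex $u \in B_0$ and apply Proposition~\ref{cay-coset} to obtain $A \subseteq \Phi$ with $G \cong \Cayley(\Phi/\Phi_u, A)$, where $A$ is a union of $\Phi_u$-double cosets and $A \subseteq \tau\Gamma$ (every edge goes one block forward, and $\tau(B_i)=B_{i+1}$). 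The point stabilizer $\Phi_u$ contains $\overline{\Gamma}_{0..\ell}$ — the automorphisms in $\Gamma$ acting trivially on $B_0$, by the remark following the definition of $\overline{\Gamma}_{j..k}$ — but need not equal it; this is exactly where cloning enters.

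Next I would pass to the subgroup $\Lambda' = \overline{\Gamma}_{0..\ell} \le \Phi_u =: \Lambda$. The index $[\Lambda : \Lambda']$ is finite: $\Lambda/\Lambda'$ injects into the permutation group of the finite block $B_0$ (an element of $\Phi_u$ fixes $u$, and modulo $\overline{\Gamma}_{0..\ell}$ is determined by its action on $B_0$, using that $\Phi_u \subseteq \Gamma$ since any automorphism fixing $u \in B_0$ fixes every block setwise). So by Proposition~\ref{cay-subgroup}, $G^+ := \Cayley(\Phi/\overline{\Gamma}_{0..\ell}, A)$ is obtained from $G$ by cloning each vertex the same finite number of times $k-1$. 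It remains to identify $G^+$ with $\overleftrightarrow{K}$ for a suitable coloured template $K$.

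The heart of the argument is the translation. By Lemma~\ref{gammabar}(1) with $j=0$, $k=\ell$, every $\overline{\Gamma}_{0..\ell}$-coset has a unique standard form $\tau^m\big(\prod_{i=0}^{\ell}\alpha_i\big)\overline{\Gamma}_{0..\ell}$ with $\alpha_i \in \Gamma_i$. Now $\Gamma_i$ acts faithfully on $B_i$ (its elements fix every other block pointwise, and an element fixing $B_i$ pointwise too is the identity by minimality of $\ell$), and $\tau^{-i}\Gamma_i\tau^i = \Gamma_0$ by Lemma~\ref{phi_lem}(1), so the possible ``values'' of $\alpha_i$ after re-indexing all live in a common finite set $V_0 := $ image of $\Gamma_0$ acting on $B_0$, except at the two ``ends'' $i=0$ and $i=\ell$ where the coset only records a one-sided amount of information (an element of $\overline{\Gamma}_{0..\ell}$ may still move $B_0$ or $B_\ell$, just not $u$), giving smaller sets $V_0^{\mathrm{end}}$, $V_\ell^{\mathrm{end}}$. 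Thus a vertex of $G^+$ is encoded by $(m; x_0, x_1, \dots, x_\ell)$ — this is the $\mathbb{Z}\times V_0\times\cdots\times V_m$ shape of Construction~4 with $m \leftrightarrow \ell$. For the edges: there is an edge from coset $Q$ to coset $R$ iff $Q^{-1}R \subseteq A$; writing $Q,R$ in standard form and using $A \subseteq \tau\Gamma$ together with the commuting relations of Lemma~\ref{gamma_prop}(1) and the shift relations of Lemma~\ref{gammabar}(2)--(3), one checks $Q^{-1}R \subseteq A$ factors coordinatewise into conditions relating $x_j$ (the $B_j$-data of $Q$) and $y_{j+1}$ (the $B_{j+1}$-data of $R$), each condition being membership in a fixed relation that is the same for all $j$ up to the re-indexing — this set of admissible pairs, decorated by which $\Phi_u$-double coset of $A$ is used (that is the ``colour'' $\phi: E(K)\to\{1,\dots,t\}$, $t = $ number of $\Phi_u$-double cosets in $A$), is precisely the coloured template $K$. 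One then verifies $A\overline{\Gamma}_{0..\ell} = A$ translates via Lemma~\ref{gammabar}(4) into the statement that the template edges at level $j$ depend only on the listed coordinates, making $K$ well-defined, and the double-coset condition $\overline{\Gamma}_{0..\ell} A \overline{\Gamma}_{0..\ell} = A$ guarantees the ``same colour'' requirement in Construction~4 is consistent.

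The main obstacle I expect is the bookkeeping at the two ends $i = 0$ and $i = \ell$: the coset representative genuinely carries less information there than in the middle, and one must argue that the resulting asymmetry is exactly what Construction~4 allows (the sets $V_0,\dots,V_m$ need not all have the same size), and that the edge-conditions near the ends still factor correctly — a direct but somewhat fiddly coset manipulation using parts (2)--(4) of Lemma~\ref{gammabar}. A secondary subtlety is confirming that the number of clones is the same for every vertex, which follows because $\Phi$ acts transitively so all point stabilizers $\Phi_v$ are conjugate and hence all contain a conjugate of $\overline{\Gamma}_{0..\ell}$ of the same index; by Lemma~\ref{gamma_prop}(2) (and Lemma~\ref{phi_lem}(1)) these conjugates are again of the form $\overline{\Gamma}_{j..j+\ell}$, so the index is genuinely independent of the vertex.
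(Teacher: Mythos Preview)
Your proposal follows essentially the same route as the paper: realise $G$ as a Cayley coset graph for $\Phi$, pass via Proposition~\ref{cay-subgroup} from the stabilizer $\Phi_u$ down to $\overline{\Gamma}_{0..\ell}$ (this is the cloning step), and then use the standard form of Lemma~\ref{gammabar}(1) to coordinatise vertices of $G^+$ and read off a template. Two points deserve correction.

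First, the colours. You take them to be the $\Phi_u$-double cosets in $A$, but once you have passed to $G^+$ the relevant stabilizer is $\overline{\Gamma}_{0..\ell}$, and the paper instead partitions $A$ into right $\overline{\Gamma}_{0..\ell-1}$-cosets (equivalently, by Lemma~\ref{gammabar}(4), into $\overline{\Gamma}_{0..\ell}$-double cosets inside $\tau\Gamma$). Each such coset $A_q$ has a single standard form $\tau\bigl(\prod_{i=0}^{\ell-1}\gamma_i\bigr)\overline{\Gamma}_{0..\ell-1}$, so it carries a single tuple $(\gamma_0,\dots,\gamma_{\ell-1})$; computing $vA_q$ and reducing to standard form then gives the edge rule $\beta_{i-1} = \tau^{-1}\alpha_i\tau\,\gamma_{i-1}$ for every $1\le i\le \ell$, which is precisely the ``all arcs have the same colour'' constraint of Construction~4. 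A $\Phi_u$-double coset can be a union of several such $A_q$, and using it as a single colour would not yield a well-defined template edge relation.

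Second, the end issues you anticipate do not arise. Your concern rests on the claim that an element of $\overline{\Gamma}_{0..\ell}$ may still move points of $B_0$; in fact $\overline{\Gamma}_{0..\ell}$ is exactly the subgroup of $\Gamma$ fixing $B_0$ pointwise, and Lemma~\ref{gammabar}(1) already gives a clean bijection between $\overline{\Gamma}_{0..\ell}$-cosets and tuples $(m,\alpha_0,\dots,\alpha_\ell)\in\mathbb{Z}\times\Gamma_0\times\cdots\times\Gamma_\ell$, with no truncation at either end. The paper simply takes the groups $\Gamma_{\ell},\Gamma_{\ell-1},\dots,\Gamma_0$ themselves as the template levels, and the product shape of Construction~4 drops out directly.
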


\begin{proof}
It follows immediately from Lemma \ref{prim} that the group $\Phi$ generated by $\tau$ and
$\Gamma$ acts transitively on $V(G)$. As before, let $B_i$ ($i\in {\mathbb Z}$) be the blocks
of the $\mathbb Z$-system on $G$. Choose a vertex $u \in B_0$ and apply Proposition \ref{cay-coset} to obtain $A \subseteq \Phi$ so that $G \cong \Cayley(\Phi/\Phi_u, A)$.  Since $\Phi_u$ is the stabilizer of $u$ and $\overline{\Gamma}_{0..\ell}$ is the subgroup of $\Phi$ which fixes every point in $B_0$ we
have $\overline{\Gamma}_{0..\ell} \le \Phi_u \le \Phi$ (and note that this also implies that
$[\Phi_u : \overline{\Gamma}_{0..\ell}]$ is finite).  It now follows from Proposition \ref{cay-subgroup}
that $G^+ =  \Cayley(\Phi/\overline{\Gamma}_{0..\ell}, A)$ is obtained from $G$ by cloning each
vertex the same number of times, so it shall suffice to prove that $G^+$ can be obtained from our
construction.

By assumption, $A$ must satisfy $\overline{\Gamma}_{0..\ell} A \overline{\Gamma}_{0..\ell} = A$ and then it follows from Lemma~\ref{gammabar} that $A \overline{\Gamma}_{0..\ell-1} = A$, so we may partition $A$ into cosets of $\overline{\Gamma}_{0..\ell-1}$ as $\{A_1, A_2, \ldots, A_t\}$. Now,
each $A_q$ also satisfies $\overline{\Gamma}_{0..\ell} A_q \overline{\Gamma}_{0..\ell} = A_q$, so we may define a Cayley coset digraph $G^+_q = \Cayley(\Phi/ \overline{\Gamma}_{0..\ell}, A_q )$ and now $G^+$ is the edge-disjoint union of the digraphs $G^+_1, \ldots, G^+_t$.
We may now view each $q = 1, \dots, t$ as a colour and view $G^+$ as having its edges coloured accordingly.

Fix $1 \le q \le t$ and consider the digraph $G^+_q$ and let $A_q = \tau \left( \prod_{i=0}^{\ell-1} \gamma_i \right) \overline{\Gamma}_{0..\ell-1}$ be represented in standard form.
Let $v = \tau^k \left( \prod_{i=0}^{\ell} \alpha_i \right) \overline{\Gamma}_{0..\ell}$ be a vertex of
$G^+_q$ in standard form.  Within the digraph $G^+_q$, the vertex $v$ will have outneighbours consisting of exactly those cosets of
$\overline{\Gamma}_{0..\ell}$ that are contained in the set
\begin{align*}
v A_q
&= \tau^k \left( \prod_{i=0}^{\ell} \alpha_i \right) \overline{\Gamma}_{0..\ell} \; \tau \left( \prod_{i=0}^{\ell-1} \gamma_i \right) \overline{\Gamma}_{0..\ell-1}  \\
&= \tau^{k+1} \left( \prod_{i=0}^{\ell} \tau^{-1} \alpha_i \tau \right) \tau^{-1} \, \overline{\Gamma}_{0..\ell} \; \tau
\left( \prod_{i=0}^{\ell-1} \gamma_i \right) \overline{\Gamma}_{0..\ell-1} \\
&= \tau^{k+1}  \left( \prod_{i=1}^{\ell} \tau^{-1} \alpha_i \tau \right)
\left( \prod_{i=0}^{\ell-1} \gamma_i \right) \overline{\Gamma}_{0..\ell-1} \\
&=  \tau^{k+1} \left( \prod_{i = 1}^{\ell} \tau^{-1} \alpha_{i} \tau \, \gamma_{i-1} \right)  \overline{\Gamma}_{0..\ell-1}.
\end{align*}
In other words, a vertex $w$ is an outneighbour of $v$ if and only if in standard form
$w = \tau^{k+1} \left( \prod_{i=0}^{\ell} \beta_i \right)  \overline{\Gamma}_{0..\ell}$ where
$\beta_{i-1} = \tau^{-1} \alpha_{i} \tau \gamma_{i-1}$ for every $1 \le i \le \ell$ (and there is no
restriction on $\beta_{\ell}$).
Next we shall define a template $K_q$ with ordered vertex partition
$(\Gamma_{\ell}, \Gamma_{\ell-1}, \ldots, \Gamma_0)$
and an edge from $\delta \in \Gamma_{i}$ to $\epsilon \in \Gamma_{i-1}$ if and only if
$\epsilon = \tau^{-1} \delta \tau \gamma_{i-1}$.  It now follows that
$(v,w)$ is an edge of $G^+_q$ if and only if (using standard form)
$v = \tau^i \alpha_0 \alpha_1 \ldots \alpha_{\ell} \overline{\Gamma}_{0..\ell}$
and $w = \tau^j \beta_0 \beta_1 \ldots \beta_{\ell} \overline{\Gamma}_{0..\ell}$ satisfy $j = i+1$
and $(\alpha_{i}, \beta_{i-1})$ is an edge of $K_q$ for every $1 \le i \le \ell$.  It follows from this
that $G^+_q \cong \widehat{K_q}$ by way of the isomorphism which maps a vertex
$v = \tau^i \alpha_0 \alpha_1 \ldots \alpha_{\ell} \overline{\Gamma}_{0..\ell}$ of $G^+_q$ to the
vertex $(i, \alpha_{\ell}, \alpha_{\ell-1}, \ldots, \alpha_0)$ of $\widehat{K_q}$.

We now define $K$ to be a coloured template with vertex set $\Gamma_1 \cup \Gamma_2 \cup \cdots \cup \Gamma_{\ell}$, vertex partition
$\{ \Gamma_1, \Gamma_2, \ldots, \Gamma_{\ell} \}$, and an edge from $\delta \in \Gamma_i$ to $\epsilon \in \Gamma_{i+1}$ of
colour $q$ if and only if this edge exists in the template $K_q$.  It now follows that
$G^+ \cong \widehat{K}$ which completes the proof.
\end{proof}

\section*{Acknowledgement}

Our attention was drawn to this problem by Norbert Seifter's talk at
the BIRS workshop on Infinite Graphs in October 2007 (Banff, Alberta).

We thank to the anonymous referees for helpful comments leading to improved 
presentation and sharper version of Proposition~\ref{Zsystem}. 

\bibliographystyle{amsplain}
\bibliography{hat}

\end{document}